\newtheorem{theorem}{Theorem}
\newtheorem*{theorem*}{Theorem}
\newtheorem{lemma}{Lemma}
\newtheorem{proposition}{Proposition}
\newtheorem{fact}{Fact}
\newtheorem{corollary}{Corollary}
\theoremstyle{definition}
\newtheorem{definition}{Definition}
\newtheorem{remark}{Remark}
\renewenvironment{proof}{
\par\noindent{\it Proof.}} {\mbox{}\hfill$\blacksquare$ \par }
\newcommand{\iso}{\xrightarrow{\sim}}
\newcommand{\into}{\hookrightarrow}
\newcommand{\bbP}{\mathbb{P}}
\newcommand{\bbZ}{\mathbb{Z}}
\newcommand{\bbQ}{\mathbb{Q}}
\newcommand{\algQ}{\overline{\mathbb{Q}}}
\newcommand{\algK}{\overline{K}}
\newcommand {\Gal}{\mathord{\rm {Gal}}}
\newcommand{\Aut}{\mathord{\rm{Aut}}}
\newcommand{\End}{\mathord{\rm{End}}}
\newcommand{\Spec}{\mathord{\rm{Spec}}}
\newcommand{\reg}{\mathord{\rm{reg}}}
\newcommand{\Sch}{\mathbf{Sch}}
\newcommand{\res}{\mathord{\rm{res}}}
\newcommand{\id}{\mathord{\rm{id}}}
\newcommand{\C}{\mathord{\rm{C}}}
\DeclareFontFamily{U}{wncy}{}
    \DeclareFontShape{U}{wncy}{m}{n}{<->wncyr10}{}
    \DeclareSymbolFont{mcy}{U}{wncy}{m}{n}
    \DeclareMathSymbol{\Sh}{\mathord}{mcy}{"58}
\title{On the fields of definition of genus-one covers of $\mathbb{P}^1$}
\author{Alexander Molyakov}
\date{}
\begin{document}
\maketitle
\begin{abstract}
It is known that sometimes a Belyi pair is not defined over its field of moduli. Instead, it is defined over a finite degree extension of its field of moduli, called a field of definition. We show that given a number $m$ there exists a Belyi pair such that the degree of a field of definition over the field of moduli is greater than~$m$.  As a byproduct, we obtain a counterexample to the local-global principle for Belyi pairs.
\end{abstract}

\let\thefootnote\relax\footnotetext{This is the accepted version of the article \textit{On the fields of definition of genus-one covers of  $\bbP^1$}, Bulletin of the London Mathematical Society, which has been published in final form at \url{http://doi.org/10.1112/blms.12840} }

\section{Introduction}
The theory of covers of the projective line has been rapidly developing over the last forty years. Covers unramified outside $\{0,1,\infty\}$ are called \textit{Belyi functions}, they are of special interest due to applications to the inverse Galois problem. The famous Belyi theorem asserts that a complex curve $X$ is defined over $\overline{\mathbb{Q}}$ iff there exists a Belyi function $\beta:X\to\bbP^1$ \cite{B79}. The category of \textit{Belyi pairs} $(X,\beta)$ is equivalent to the category of \textit{dessins d'enfants}, graphs embedded into surfaces in a particular way \cite[p.188, Theorem]{S17}. Thus the natural Galois-action on Belyi pairs induces a Galois-action on the set of dessins, which turns out to be exact. 

\textit{The absolute field of moduli} of a (ramified) cover $X\to \bbP^1$ is the fixed field of its stabilizer with respect to the natural action of the absolute Galois group $\Gal ( \algQ / \bbQ)$. Clearly it is contained in each field of definition. Covers without automorphisms as well as regular covers are known to be defined over their field of moduli \cite{DD97}. In fact, covers defined over the field of moduli form a Zariski-dense subset of the corresponding Hurwitz space \cite{DDM04}. However, there are examples of covers (and even of Belyi pairs) not defined over the field of moduli \cite{C94, CG94, SF95}. The examples presented in these papers give us covers which are defined over quadratic extensions of the corresponding field of moduli. It turns out that any cover of genus 0 is defined over a quadratic extension \cite{C94}. We deal with the genus-one case where, as we show, the degree of a minimal field of definition over the field of moduli can be arbitrary large. Precisely, we prove the following result which appears as Theorem \ref{th2} below:

\begin{theorem*} Let $E$ be a non-CM elliptic curve over $\algQ$ and let $m$ be a positive integer. Then there exists a Belyi function $\beta:E \to \bbP^1_{\algQ}$  with the absolute field of moduli $M^{abs}$ and the following property:
for any field of definition $L$ of the Belyi pair $(E,\beta)$, we have $[L:M^{abs}]\geq m$.
\end{theorem*}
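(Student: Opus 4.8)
\medskip
\noindent\textit{Proof proposal.} The plan is to turn the obstruction to descending to the field of moduli into an explicit Galois-cohomology class and to force that class to have order at least $m$. For a Belyi pair $(C,\beta)$ with abelian automorphism group $A=\Aut(C,\beta)$ and absolute field of moduli $M^{abs}$, Weil's descent criterion attaches a canonical class $\omega\in H^2\!\left(\Gal(\algQ/M^{abs}),A\right)$, for the natural action of $\Gal$ on $A$, which vanishes on restriction to $\Gal(\algQ/L)$ if and only if $(C,\beta)$ is defined over $L$ (and every field of definition contains $M^{abs}$). Since the composite of corestriction with restriction on $H^2$ is multiplication by $[L:M^{abs}]$, any field of definition $L$ satisfies $[L:M^{abs}]\cdot\omega=0$, hence $[L:M^{abs}]\ge\operatorname{ord}(\omega)$. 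It is therefore enough to produce, for the given non-CM $E$ and any $m$, a Belyi function on a curve $\algQ$-isomorphic to $E$ whose obstruction class has order $\ge m$.

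Fix an odd prime $n\ge m$. The construction takes $A$ to be a translation subgroup of order $n^2$. Pick a number field $M$ over which $E$ is defined together with a torsor $Y$ under $E$ whose class $\eta=[Y]\in H^1\!\left(\Gal(\algQ/M),E\right)$ is not divisible by $n$: such $\eta$ exists because, for $n$ odd, the Kummer sequence $0\to E[n]\to E\xrightarrow{\,n\,}E\to 0$ identifies $H^1\!\left(\Gal(\algQ/M),E\right)\!/\,nH^1\!\left(\Gal(\algQ/M),E\right)$ with $H^2\!\left(\Gal(\algQ/M),E[n]\right)$, and the latter is nonzero by Poitou--Tate duality ($\bigoplus_v E[n](M_v)^\vee$ is infinite while $E[n](M)$ is finite). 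By Belyi's theorem $Y$ admits a Belyi function $\bar\beta\colon Y\to\bbP^1$ defined over $M$ (Belyi's reduction only involves the given curve and rational auxiliary maps), and a generic choice of $\bar\beta$ has trivial automorphism group. Let $C\to Y$ be the connected $E[n]$-cover, which over $\algQ$ is the isogeny $[n]\colon E\to E$; then $C$ is again a torsor under $E$, so it is $\algQ$-isomorphic to $E$, and $\beta:=\bar\beta\circ(C\to Y)$ is a Belyi function, $C\to Y$ being unramified. Here $\Aut(C,\beta)=E[n]$: translations form a normal subgroup of the full automorphism group $E(\algQ)\rtimes\{\pm1\}$ of the curve, so any automorphism of $(C,\beta)$ descends to an automorphism of $(Y,\bar\beta)$, hence is trivial on $Y$ and therefore a deck transformation of $C\to Y$. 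Now a model of $(C,\beta)$ over a field $L\supseteq M$ is the same as a torsor $C_L$ under $E$ with $n\cdot[C_L]=\eta|_L$, so the obstruction class is $\delta(\eta)$, the image of $\eta$ under the connecting map $\delta\colon H^1\!\left(\Gal(\algQ/M),E\right)\to H^2\!\left(\Gal(\algQ/M),E[n]\right)$; as $\ker\delta=nH^1$ and $n$ is prime, $\operatorname{ord}(\delta(\eta))=n$. With the first paragraph (and noting that when $M^{abs}\subsetneq M$ the class $\delta(\eta)$ is the restriction of a class of order $\ge n$ over $M^{abs}$) we conclude $[L:M^{abs}]\ge n\ge m$ for every field of definition $L$. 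If moreover $E,M,n$ are chosen so that $\delta(\eta)$ can be taken in $\Sh^2\!\left(\Gal(\algQ/M),E[n]\right)$ --- arranged via the growth of the Tate--Shafarevich group so that $\Sh^1\!\left(\Gal(\algQ/M),E[n]\right)\ne 0$ --- then $(C,\beta)$ is defined over every completion of $M$ though not over $M$, yielding the local--global counterexamples.

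The main obstacle is the descent bookkeeping underlying the two facts used above. First: the Weil obstruction of a Belyi pair whose source is a nontrivial torsor under $E$ and whose automorphism group is the translation group $E[n]$ is exactly $\delta([Y])$ with $E[n]$ in its natural module structure --- proving this comes down to comparing the Weil cocycle with the standard description of the connecting map $\delta$, which requires showing that the descent isomorphisms $f_\sigma$ all cover the identity of $(Y,\bar\beta)$ and are $E$-equivariant isomorphisms of torsors; here $\End(E)=\bbZ$ (this is where non-CM enters) forces the induced maps on Jacobians to be the identity. Second: one must secure a torsor $Y$ with $[Y]\notin nH^1$ that simultaneously carries a Belyi function over $M$ with trivial automorphisms, i.e.\ dovetail the cohomological non-divisibility with the Belyi-theoretic genericity. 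The supporting ingredients --- non-divisibility of $H^1\!\left(\Gal(\algQ/M),E\right)$, the Poitou--Tate facts, descent of Belyi's construction to $M$, and (for the local--global statement) the growth of $\Sh$ --- are standard, so the work is in assembling them coherently.
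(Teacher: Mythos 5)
Your proposal is correct in substance and shares the paper's geometric skeleton --- compose an automorphism-free Belyi map on a genus-one torsor (this "generic choice has trivial automorphism group" is not a triviality: it is exactly Couveignes's refinement of Belyi's theorem, quoted in the paper as Theorem \ref{th1}, and should be cited rather than asserted), pull it back along an unramified isogeny, use non-CM to see that $\Aut$ is exactly the deck group, and identify the descent obstruction with the connecting map applied to the class of the torsor; the "descent bookkeeping" you defer (that $M$ is indeed a field of moduli, that the Jacobian of any model is $E$ itself, and that the Weil cocycle is $\delta(\eta)$) is precisely what the paper does in Lemma \ref{lemma0}, Lemma \ref{lemma1} and Proposition \ref{prop8}, so it is real but routine work. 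Where you genuinely diverge is the quantitative step. The paper works over $M=K(E[m],\zeta_m)$ so that $E[m]\cong\mu_m\times\mu_m$, identifies $H^2(G_M,E[m])$ with $Br(M)[m]^2$, takes a cyclic algebra of \emph{index} $m$ (so the class survives every extension of degree $<m$), and needs the surjectivity of $\delta$ (Lemma \ref{lemma3}, via $H^2(G_M,E)$ being $2$-torsion or $M$ imaginary) to realize that prescribed class as an obstruction; the degree bound then comes from non-splitting over $ML$. You instead fix a prime $n\ge m$, produce a torsor class $\eta\notin nH^1(G_M,E)$ directly (Kummer plus a Poitou--Tate nonvanishing of $H^2(G_M,E[n])$, which is fine for odd $n$), so you never need surjectivity of $\delta$ onto all of $H^2$ nor the rationality hypotheses $E[m]\subset E(M)$, $\zeta_m\in M$, and you convert nonvanishing into a degree bound by \emph{order} plus restriction--corestriction ($[L:M^{abs}]\cdot\omega=0$), which forces $n\mid[L:M^{abs}]$. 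That mechanism is valid and arguably slicker, at the cost of replacing $m$ by a prime $n\ge m$ and losing the paper's extra flexibility: realizing an \emph{arbitrary} class of $H^2(G_M,E[m])$ as an obstruction is what also powers the explicit computability claims and the Hasse-principle counterexample (there your closing remark about $\Sh$ coincides with the paper's Theorem \ref{th3}).
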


The construction proceeds as follows: using the idea  of a reduced cover \cite{DE99} models of the initial cover can be put in one-to-one correspondence with models of the regular cover $E\to E/\Aut(E,\beta)$. The regular cover turns out to be an isogeny in the case we are interested. Afterwards some analysis of cohomological obstructions demonstrates the existence of an isogeny with prescribed properties. Finally, the refined version of Belyi theorem from \cite{C96} is applied to complete the construction. 

The cohomological obstructions for a field of moduli being a field of definition was introduced by P. Dèbes and J.-C. Douai in a more general context of arbitrary algebraic covers \cite{DD97}. This obstruction consists of a family of elements in $H^2(G_M,\C(\Aut (X,f)))$ where $G_M$ is the absolute Galois group of a field of moduli $M$ and $\C(\Aut (X,f))$ stands for the centre of the  automorphisms group of a cover $(X,f)$. However, even in the case of covers with an abelian group of automorphisms when the obstruction reduces to a single element, it is generally unclear which elements in the cohomology group arise in this way. This makes constructing examples by means of purely cohomological techniques more difficult. The key observation in the genus-one case is that under some assumptions each element of $H^2(G_M,E[m])$ appears as an obstruction for a Belyi pair (Lemma \ref{lemma3}). 

Our approach can be also applied to produce counterexamples to the Hasse principle for Belyi pairs in the following sense (see Corollary \ref{cor4} below):

\begin{theorem*} There exists a genus-one Belyi pair $(X,f)$ over $\algQ$ which cannot be defined over its field of moduli $M$, yet which can be defined over the completion $M_\nu$ for all places $\nu$ of $M$.\end{theorem*}

Whereas the local-to-global problem has a positive answer for $G$-covers with an exception of the special case of Grunwald–Wang theorem \cite{DD97, DD98}, the Hasse principle fails to be true for mere covers. Though a counterexample is presented in \cite{CR04}, counterexamples for Belyi pairs seem to be absent in the literature. The method we use to obtain these examples does not immediately lead to explicit formulas but it is effective and appears to be computable in practice.   

I would like to thank G. Shabat and other participants of the seminar "Graphs on surfaces and curves over number fields" for helpful discussions, fruitful remarks and encouraging interest in the subject. I am also grateful to J.-M. Couveignes for his comments that helped to improve the exposition and to S. Gorchinskiy for pointing out to me the folklore fact which is stated as Proposition \ref{prop5} below.

\section{Preliminaries}
\subsection{Base change and descent}
\textbf{Notation.} Let $\Sch_L$ be the category of schemes over a field $L$. 

To an automorphism $g:L\to L$ we can associate a \textit{base change functor} $g_*:\Sch_L\to \Sch_L$ which is defined as follows:
for an $L$-scheme $X$ with a structure morphism $\psi\colon X\to \Spec(L)$, the object $g_*X\in \Sch_L$ is the scheme $X$ with a modified structure morphism $(g^*)^{-1}\circ \psi\colon X\to \Spec(L)$ where $g^*\colon \Spec(L)\to\Spec(L)$ is the automorphism induced by $g$. In other words, $g_*$ is the pullback functor $\Sch_L\to \Sch_L$ under the map $g^*\colon \Spec(L)\to\Spec(L)$.

If $X\in \Sch_K$ is a scheme given over a smaller field $K\subset L$, then $X_L$ stands for the $L$-scheme $X\times_{\Spec (K)} \Spec (L)$, that is the \textit{extension of constants}. By abuse of notations we also write $X_K$ for a $K$-model of a scheme $X$ over the larger field $L$ when it leads to no confusion.  

We recall that when $L\supset K$ is a Galois extension, $X_L$ admits a \textit{descent datum} consisting of a family of $L$-isomorphisms $\rho_X(g):g_{*}X_L\to X_L$ for each $g\in\Gal(L/K)$ satisfying the cocycle condition $\rho_X(gh)=\rho_X(g)\circ g_*\rho_X (h)$ \cite[(14.17.1)]{GW}. A morphism of $L$-schemes with descent data is an $L$-morphism $f: X\to Y$ \textit{compatible with the descent data} in the sense that $f\circ \rho_X(g)=\rho_Y (g)\circ g_*f$.
The category of quasi-projective varieties admits an effective Galois descent \cite[Theorem 14.86]{GW}. This fact can be stated as follows:

\begin{fact}\label{fact1} Let $K\subset L$ be a finite Galois extension. Then the category of quasi-projective varieties over $L$ with descent data is equivalent to the category of quasi-projective varieties over the smaller field $K$ where the functor in one direction is an extension of constants.\end{fact}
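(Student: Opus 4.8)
The plan is to deduce this from faithfully flat descent, specialised to the finite Galois situation. For a finite Galois extension $K\subset L$ with group $G=\Gal(L/K)$, the morphism $\Spec L\to\Spec K$ is faithfully flat and quasi-compact, and the canonical isomorphism $L\otimes_K L\iso\prod_{g\in G}L$ (sending $a\otimes b$ to $(a\cdot g(b))_g$) identifies an fpqc descent datum on a scheme over $L$ with a family of isomorphisms $\rho(g):g_*X_L\to X_L$; the fpqc cocycle condition over $L\otimes_K L\otimes_K L\cong\prod_{g,h}L$ unwinds precisely to the relation $\rho(gh)=\rho(g)\circ g_*\rho(h)$. So a ``variety over $L$ with descent data'' in the sense used here is the same thing as a variety over $L$ equipped with an fpqc descent datum relative to $\Spec L/\Spec K$, and Fact~\ref{fact1} becomes the assertion that such descent data are effective within the class of quasi-projective varieties, together with full faithfulness of the base-change functor.

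First I would treat the affine case. If $X_L=\Spec A$, a descent datum is a semilinear $G$-action on the $L$-algebra $A$; set $A_0=A^G$. The normal basis theorem — equivalently, faithful flatness of $L/K$ together with the vanishing of $H^1(G,\mathrm{GL}_n(L))$ — yields a $G$-equivariant isomorphism $A_0\otimes_K L\iso A$, so $X_0:=\Spec A_0$ is a $K$-model inducing the given descent datum, and $\mathrm{Hom}_{G}(A,B)=\mathrm{Hom}_K(A^G,B^G)$ via $(-)\otimes_K L$ gives full faithfulness. A marked $L$-point fixed by the descent datum corresponds to a $G$-stable, hence $K$-rational, closed point, which handles the pointed affine case.

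For a general quasi-projective $X_L$: since $X_L$ is quasi-compact it admits a finite affine open cover, and transporting its charts by all the automorphisms $\rho(g)\circ g_*(-)$ produces a finite $G$-stable affine open cover $\{U_i\}$; because a quasi-projective variety is separated, each intersection $U_i\cap U_j$ is again affine. The descent datum restricts to descent data on the $U_i$ and on the $U_i\cap U_j$, compatibly with the restriction maps, so the affine case descends these to $K$-schemes $U_{i,0}$ and $U_{ij,0}$ equipped with $K$-morphisms $U_{ij,0}\rightrightarrows U_{i,0},U_{j,0}$ still satisfying the gluing identities; gluing yields a $K$-scheme $X_0$ of finite type. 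One checks $(X_0)_L\cong X_L$ compatibly with the original descent datum, that morphisms glue in the same way (giving full faithfulness and essential surjectivity in general), and that $X_0$ is again quasi-projective. In the pointed case one arranges the cover to contain a $G$-stable affine neighbourhood of the marked point and descends the point chart by chart.

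The step I expect to be the main obstacle is the passage from the affine to the quasi-projective case: one must verify that the descent data genuinely restrict to, and glue back from, a $G$-stable affine cover without cocycle incompatibilities, and that quasi-projectivity (not merely finite-type separatedness) of $X_0$ is preserved — the latter being exactly where the hypothesis is used, either by choosing an ample line bundle on $X_L$ compatible with the descent datum, or by invoking directly the known effectivity of fpqc descent for quasi-projective schemes (SGA~1, Exp.~VIII). Everything else reduces to bookkeeping with semilinear algebra and the identification $L\otimes_K L\cong\prod_G L$.
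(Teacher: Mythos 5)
The paper does not actually prove Fact~\ref{fact1}: it is quoted from Milne's descent-theory notes \cite{Mag}, so the relevant comparison is with the standard argument given there. Your overall strategy coincides with that standard argument (affine case via semilinear $G$-actions and the vanishing of $H^1(G,\mathrm{GL}_n(L))$, then globalization over a $G$-stable affine cover, with separatedness keeping intersections affine), but your sketch has a genuine gap exactly at the decisive globalization step.

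Writing $G=\Gal(L/K)$ and $\sigma_g=\rho(g)\circ g_*$, the collection of opens obtained by ``transporting the charts by all the automorphisms $\sigma_g$'' is only \emph{permuted} by $G$: an individual $U_i$ is carried to some other chart, not into itself, so the claim that ``the descent datum restricts to descent data on the $U_i$'' fails and the affine case cannot be applied chart by chart. The missing idea is the one for which quasi-projectivity of $X_L$ is actually needed, and it enters \emph{before} any gluing: for each point, its finite twisted orbit $\{\sigma_g(x)\}_{g\in G}$ lies in a single affine open $V$ (this is the special property of quasi-projective varieties), and then $\bigcap_{g\in G}\sigma_g(V)$ is an affine (by separatedness) open that is genuinely $G$-stable; finitely many such opens give the cover to which the affine case applies. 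This is not a cosmetic repair: for merely separated finite-type schemes effectivity of such descent data can genuinely fail (e.g.\ descent data arising from free involutions on Hironaka-type proper non-projective threefolds are not effective), so any argument that first glues a finite-type $X_0$ and only afterwards worries about its quasi-projectivity cannot be correct as stated. Your fallback of invoking SGA~1, Exp.~VIII is legitimate, but note that the statement there requires the ample invertible sheaf to be equipped with its own descent datum, which again needs a construction (tensoring the $G$-translates and handling the cocycle, which a priori holds only up to scalars). Two smaller points: in the affine step $A$ is in general infinite-dimensional over $L$, so the $H^1(G,\mathrm{GL}_n(L))$ argument should be run on a filtered union of finite-dimensional $G$-stable $L$-subspaces, and one should record that $A^G$ is a finitely generated $K$-algebra so that $\Spec(A^G)$ is indeed a variety.
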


For another treatment of Galois descent in the classical language of algebraic varieties see \cite[Corollary 16.25]{Mag} and the foundational paper of Weil \cite{Weil}.

\subsection{Covers of curves and their fields of moduli}

We will use the following notations and assumptions throughout this paper: all curves are assumed to be smooth, proper and geometrically integral; we also fix a base field $K$ of characteristic zero and a Galois extension $F\supset K$; all fields are assumed to contain $K$ and to be contained in $F$. 

\begin{definition}Consider a curve $B$ over a field $L$, $K\subset L\subset F$.
\begin{enumerate} [label=(\roman*)] 
    \item \textit{The category of (ramified) covers} of $B$ consists of pairs $(X,f)$ where $f:X\to B$ is a surjective $L$-morphism of curves over $L$. 
\item \textit{A morphism of covers} $h:(X_1,f_1)\to (X_2,f_2)$ is a morphism $h:X_1\to X_2$ such that $f_2\circ h= f_1$. In particular, \textit{an automorphism of a cover} $f:X\to B$ is an automorphism $\sigma\in \Aut(X)$ such that $f\circ \sigma=f$. \end{enumerate}
\end{definition}

There is a natural action of $\Gal(F/L)$ on the category of covers $f:X\to B_F$. It is defined as follows: $$g_*(X,f)=(g_*X, \rho_B(g)\circ g_*(f)).$$ Note that this action depends on the chosen $L$-model  $B$. 

In the following proposition a \textit{pointed cover} is a surjective morphism of curves $f\colon X\to B$ with marked rational points $Q\in X,\, P\in B$ such that $f(Q)=P$.
\begin{proposition} \label{prop1}
Consider a finite Galois extension $M\subset F$, a curve $B$ over $M$ and a~(pointed) cover $f:X\to B_F$. Then the cover $(X,f)$ admits a~(pointed) $M$-model iff there exists a family of (pointed) cover isomorphisms $\rho(g):\; g_*(X,f)\to (X,f)$ defined for each $g\in \Gal(F/M)$ and satisfying the cocycle condition \begin{equation}\label{1}
     \rho(gh)=\rho(g)\circ g_*\rho(h).\end{equation}
Furthermore, any such family of isomorphisms $\{\rho(g)\}_g$ satisfying \eqref{1} defines a unique (pointed) $M$-model of the cover $(X,f)$.
\end{proposition}
\begin{proof}
This statement follows form the effective Galois descent for the category of quasi-projective varieties (Fact \ref{fact1}) since all curves are quasi-projective. A rational point on a curve over $F$ can be viewed as a morphism from the one-point scheme $\Spec (F)$. Thus, in the case of pointed covers it suffices to apply Galois descent to the following diagram: 
\begin{equation*}
\xymatrix{
& X\ar[d]^{f}\\
\Spec(F)\ar[ru]^Q\ar[r]^{\;\;\;\;P} & B_F
}
\end{equation*}

\end{proof}

\begin{definition}
Let $B$ be a curve over $K$. \begin{enumerate}[label=(\roman*)] 
\item {\textit {The absolute field of moduli} of a cover $f:X \to B_{F}$ is the fixed field $F^{\textrm{Stab}(f)}$ where the stabilizer is taken with respect to the above action of $\Gal(F/K)$ on the set of equivalence classes of covers. We also use the notion of \textit{a relative field of moduli} for any field containing the absolute field of moduli (``a field of moduli'' will always refer to this case).}
\item {We say that $f$ \textit{is defined over a field $L\subset F$} if there exists an $L$-model $f_L:X_L\to B_L$ isomorphic to $(X,f)$ over $F$. In this case $L$ is called \textit{a field of definition.}}
\end{enumerate}\end{definition} 

In other words a field $M$ is said to be a field of moduli of a cover $f:X\to B_{F}$ if $(X,f)$ is isomorphic to $g_*(X,f)$ for each $g\in\Gal(F/M)$.

The two concepts we have just introduced are in the following relation as was demonstrated in \cite{DD97}: 

\begin{proposition}\label{prop2}
\begin{enumerate}[label=(\roman*)]
\item{Every field of definition is a field of moduli.}
\item{The absolute field of moduli of a cover over $\algQ$ is the intersection of all fields of definition. In particular, if there exists a minimal (by inclusion) field of definition it coincides with the field of moduli.}
\end{enumerate}\end{proposition}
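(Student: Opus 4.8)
For part (i) I would argue straight from base change. Suppose $f$ is defined over $L$, say by an $L$-model $f_L\colon X_L\to B_L$ with $(f_L)_F\cong(X,f)$; as $B$ is already defined over $K\subseteq L$, its descent datum is the tautological one. For $g\in\Gal(F/L)$ the automorphism $g$ fixes $L$ pointwise, so base change of the $L$-scheme $X_L$ along $g$ yields a canonical isomorphism $g_*((X_L)_F)\iso(X_L)_F$ compatible with the projections to $B_F$ and with $\rho_B(g)$; concretely, $g_*(X,f)\cong(X,f)$. Hence $\Gal(F/L)\subseteq\mathrm{Stab}(f)$, and taking fixed fields (legitimate since $F/L$ is Galois) gives $L=F^{\Gal(F/L)}\supseteq F^{\mathrm{Stab}(f)}=M^{abs}$; equivalently, the isomorphisms just produced show that $L$ satisfies the defining condition of a field of moduli.

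For part (ii), write $M=M^{abs}=\algQ^{\mathrm{Stab}(f)}$ and let $N$ be the intersection of all fields of definition of $f$; this family is nonempty because a cover over $\algQ$ is defined over a number field. The inclusion $M\subseteq N$ is immediate from part (i). For $N\subseteq M$ I would show that $f$ is defined over $K_\sigma:=\algQ^{\langle\sigma\rangle}$ for every $\sigma\in\mathrm{Stab}(f)$; each such $K_\sigma$ is then a field of definition, so $N$, lying in all of them, satisfies $N\subseteq\bigcap_{\sigma\in\mathrm{Stab}(f)}\algQ^{\langle\sigma\rangle}=\algQ^{\mathrm{Stab}(f)}=M$. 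Given this, the ``in particular'' clause is immediate: a minimal field of definition, if one exists, is contained in every field of definition and so equals $N=M$.

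It remains to carry out the descent to $K_\sigma$. Note that $\Gal(\algQ/K_\sigma)=\overline{\langle\sigma\rangle}$ is procyclic, topologically generated by $\sigma$. By Proposition \ref{prop1} I need a continuous family of cover isomorphisms $\rho(\tau)\colon\tau_*(X,f)\iso(X,f)$, $\tau\in\Gal(\algQ/K_\sigma)$, satisfying the cocycle condition \eqref{1}. Since $\sigma\in\mathrm{Stab}(f)$ there is at least one isomorphism $\phi\colon\sigma_*(X,f)\iso(X,f)$; because $\sigma$ topologically generates the group, the whole family $\rho$ is forced by $\phi$, and the only point to check is that it closes up into an honest continuous cocycle. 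Following the cohomological analysis of \cite{DD97}, the obstruction to this — after replacing $\phi$, if necessary, by $a\phi$ for a suitable $a$ in the centre $\C(\Aut(X,f))$ — is a single class in $H^2(\Gal(\algQ/K_\sigma),\C(\Aut(X,f)))$, and over $\algQ$ this group vanishes.

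I expect the vanishing of that $H^2$ to be the only substantive obstacle, and it is precisely the step where the base field $\algQ$ (rather than an arbitrary field) is used — over general base fields such classes are typically nonzero, which is exactly what the rest of the paper exploits. The mechanism is the structure of procyclic groups together with the finiteness of $\C(\Aut(X,f))$; the one delicate case is that of finite-order $\sigma$ (complex conjugations, by Artin--Schreier), which is handled by the classical argument of \cite{DD97}. The remaining steps — propagating descent data through Fact \ref{fact1} and Proposition \ref{prop1} — are routine, and I anticipate no trouble there.
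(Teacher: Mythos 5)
Part (i) of your argument is fine, and your overall plan for (ii) --- descend to $K_\sigma=\algQ^{\overline{\langle\sigma\rangle}}$ for each $\sigma$ in the stabilizer and intersect --- is the classical route of \cite{DD97} (note the paper itself gives no proof of Proposition \ref{prop2}, only that citation). The genuine gap is exactly at the point you flag and then defer: $\sigma$ of finite order. By Artin--Schreier such a $\sigma$ is an involution, $\Gal(\algQ/K_\sigma)\cong\bbZ/2\bbZ$ and $K_\sigma$ is real closed; the group $H^2(\bbZ/2\bbZ,\C(\Aut(X,f)))$ does \emph{not} vanish in general (already $\bbZ/2\bbZ$ with trivial action gives $\bbZ/2\bbZ$), so your assertion that ``over $\algQ$ this group vanishes'' is false in this case, and there is no ``classical argument'' that rescues the step: a field of moduli contained in a real closed field need not be a field of definition. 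This real obstruction is precisely the phenomenon the rest of the paper works around (the real places in Proposition \ref{prop}, and the ``odd order or purely imaginary'' hypothesis of Lemma \ref{lemma3} exist exactly because the local obstruction at a real place can be nonzero). So the intermediate claim ``$f$ is defined over $K_\sigma$ for \emph{every} $\sigma\in\mathrm{Stab}(f)$'' is not just unproved --- it fails in general, and the proof must be organized so as never to need it. Incidentally, the vanishing in the good case is not a feature of the top field being $\algQ$, nor of finiteness of $\C(\Aut(X,f))$, but of $\overline{\langle\sigma\rangle}$ being torsion-free procyclic, hence projective (cohomological dimension $\leq 1$).

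The repair avoids involutions altogether. By part (i), $M:=\algQ^{\mathrm{Stab}(f)}$ is a number field. Given $x\notin M$, it suffices to find $\sigma\in\Gal(\algQ/M)$ of \emph{infinite} order with $\sigma(x)\neq x$: then $\overline{\langle\sigma\rangle}$ is torsion-free procyclic (an involution inside this abelian group would commute with $\sigma$, and by Artin--Schreier the centralizer of an involution in $G_{\bbQ}$ has order $2$), hence projective, so the extension of Proposition \ref{prop5} splits continuously (pass to the limit over finite subextensions, or quote \cite{DD97}), $f$ is defined over $K_\sigma$, and $x\notin K_\sigma$. Such a $\sigma$ always exists: choose $\tau\in\Gal(\algQ/M)$ with $\tau(x)\neq x$; every element of the coset $\tau\,\Gal(\algQ/M(x))$ moves $x$, and if all of them had order $\leq 2$ then $\tau h\tau^{-1}=h^{-1}$ for all $h\in\Gal(\algQ/M(x))$, forcing the absolute Galois group of the number field $M(x)$ to be abelian, which is absurd; an element of order $>2$ in that coset has infinite order by Artin--Schreier. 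With this substitute for your $K_\sigma$-step the rest of your argument (including the ``in particular'' clause) goes through; also note that even in the infinite-order case ``the family $\rho$ is forced by $\phi$'' is not by itself a construction of a continuous cocycle --- it is cleaner to invoke projectivity (every extension of a projective profinite group by a finite group splits) than to build the section by hand.
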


The first part of Proposition \ref{prop2} implies that the absolute field of moduli is a finite extension of $K$. Indeed every cover over $F$ can be defined over a finite subextension $L\supset K$ and the absolute field of moduli is contained in $L$.  

For some types of covers a field of moduli is a field of definition:
\begin{proposition}\label{prop3}Let $B$ be a curve over $K$. Consider a cover $f:X\to B_{F}$ with a field of moduli $M$.

\begin{enumerate}[label=(\roman*)]
\item If $\Aut(X,f)=\{\id\}$, then $(X,f)$ has a unique $M$-model.
\item If the cover $f$ is regular (i.e. $\Aut(X,f)$ acts transitively on the fiber of a non-critical $F$-point) and $B$ has a non-critical $M$-point $P$, $f$ is defined over $M$. Assume furthermore that $X$ has an $F$-point $Q$ over $P$. Then $f$ has a unique $M$-model $X_M\to B_M$ such that $X_M$ possesses an $M$-point over $P$.
\end{enumerate}
\end{proposition}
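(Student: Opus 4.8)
The plan is to reduce both statements to the Galois-descent criterion of Proposition \ref{prop1}: it suffices to produce, for every $g\in\Gal(F/M)$, a cover isomorphism $\rho(g)\colon g_*(X,f)\to(X,f)$ satisfying the cocycle condition \eqref{1}. The hypothesis that $M$ is a field of moduli hands us such an isomorphism for each $g$; the entire problem is to pin the $\rho(g)$ down uniquely, so that coherence becomes automatic. Before starting I would replace $F$ by a finite Galois subextension of $F/K$ over which $(X,f)$ (and, in part (ii), the point $P$) is already defined, so that no continuity hypothesis on the descent datum is needed; this is harmless since every cover over $F$ descends to such a subextension.

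For part (i) the argument is immediate. When $\Aut(X,f)=\{\id\}$ there is exactly one isomorphism $\rho(g)$ for each $g$. Both $\rho(gh)$ and $\rho(g)\circ g_*\rho(h)$ are cover isomorphisms $(gh)_*(X,f)\to(X,f)$, so they must coincide, and \eqref{1} holds for free; Proposition \ref{prop1} then yields an $M$-model. For uniqueness I would observe that any two $M$-models are linked by a unique $F$-isomorphism of covers, which is automatically compatible with the (unique) descent data and hence, by Fact \ref{fact1}, descends to $M$.

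For part (ii) I would use the point $P$ to rigidify the descent data. Write $G=\Aut(X,f)$ and $Q=f(P)$. Regularity of $f$ means $G$ acts simply transitively on every non-critical fibre, so $f^{-1}(Q)$ is a $G$-torsor; and since $Q$ is an $M$-point of $B$, the relation $f\circ\varphi=\rho_B(g)\circ g_*f$ forces any cover isomorphism $\varphi\colon g_*(X,f)\to(X,f)$ to carry $g_*P$ into $f^{-1}(Q)$, because $f(\varphi(g_*P))=\rho_B(g)(g_*Q)=Q$. By transitivity of the $G$-action there is then a unique $\sigma\in G$ with $\sigma\varphi(g_*P)=P$, and I would define $\rho(g)=\sigma\varphi$; by freeness of the action, $\rho(g)$ is the unique cover isomorphism $g_*(X,f)\to(X,f)$ taking $g_*P$ to $P$. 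The cocycle identity then comes for free: $g_*\rho(h)$ takes $g_*(h_*P)=(gh)_*P$ to $g_*P$, after which $\rho(g)$ takes $g_*P$ to $P$, so $\rho(g)\circ g_*\rho(h)$ and $\rho(gh)$ both take $(gh)_*P$ to $P$ and hence agree by the uniqueness just established. Proposition \ref{prop1} now produces an $M$-model $X_M\to B_M$, and because the descent datum fixes $P$ the point $P$ becomes $M$-rational on $X_M$ and lies over $Q$; uniqueness of the model carrying an $M$-point over $P$ follows exactly as in part (i), two such models carrying descent data normalised at $P$, which must coincide.

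The only real subtlety is the rigidification step in (ii): one must know that the fibre over $Q$ is genuinely a $G$-torsor — this is precisely where "regular" and "$P$ non-critical" enter, excluding nontrivial inertia — and that $Q$ being $M$-rational allows $\varphi$ to carry $g_*P$ back into that fibre, so that the correcting automorphism $\sigma$ exists and, by freeness, is unique. Everything else is formal unwinding of the descent dictionary of Fact \ref{fact1} and Proposition \ref{prop1}.
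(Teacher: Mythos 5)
Your argument is correct and is essentially the paper's own proof: in (i) the unique isomorphism automatically satisfies the cocycle condition, and in (ii) you rigidify by a point in the fibre over the non-critical $M$-rational base point so that the now-unique pointed isomorphisms form a descent datum, which is exactly the paper's reduction to pointed covers and Proposition \ref{prop1}. The only differences are notational (the paper takes $P\in B(M)$ and picks $Q\in f^{-1}(P)$, while you swap the two labels, following the statement's wording) and that you spell out the simple-transitivity and cocycle verifications that the paper leaves implicit.
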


\begin{proof} (See also \cite{DD97}.)
Without loss of generality we may assume that $[F:K]<\infty$ since the infinite case reduces to the finite one as all covers over $F$ are defined over some finite subextension. 

\textit{(i)} If $f$ has no non-trivial automorphisms, there is a unique isomorphism of covers $g_*(X,f)\to (X,f)$ for each $g\in \Gal(F/M)$. It satisfies the cocycle condition \eqref{1} because of the uniqueness and therefore gives a unique $M$-model of $f$ by Proposition \ref{prop1}.

\textit{(ii)} Now suppose $f$ is regular and $P\in B(M)$ is non-critical. Choose an arbitrary geometric point $Q\in f^{-1}(P)$. Regularity of $f$ implies that for each $g\in \Gal(F/M)$ there exists a unique isomorphism of covers $\rho\colon g_*(X,f)\to (X,f)$ such that $\rho(g_*Q)=Q$. Similarly, this system of isomorphisms satisfies the cocycle condition \eqref{1} because of the uniqueness and defines an $M$-model of $f$ by Proposition \ref{prop1}. If furthermore $Q\in X(F)$, $(X,Q,f)$~is a pointed cover and there exists a unique isomorphism of pointed covers $g_*(X,Q,f)\to (X,Q,f)$ for each $g\in \Gal(F/M)$. Applying Proposition \ref{prop1} for pointed covers we obtain a unique pointed $M$-model $X_M\to B_M$.

\end{proof}

Note that the last assumption of Proposition \ref{prop3}(ii) always holds if $F=\algK$ is algebraically closed. In fact, the condition that the point $P$ in Proposition \ref{prop3}(ii) is non-critical can be weakened, see \cite{SJ} for a more general result.

\begin{proposition}\label{prop4}Let $B$ be a curve over $K$. Consider a cover $f:X\to B_{F}$ with a field of moduli $M$. Then the following holds true:
\begin{enumerate}[label=(\roman*)]
    \item {The cover can be decomposed as $f=f_0\circ f^{\reg}$ where $f^{\reg}:X\to X/\Aut(X,f)$ is the canonical projection and $f_0:X/\Aut(X,f)\to B$ is uniquely defined; }
    \item{There exists a canonical $M$-model $X_0\to B$ of $f_0$;}
    \item{$M$ is a field of moduli of $f^{\reg}$, the initial cover $f$ is defined over $M$ iff $f^{\reg}$ has an $M$-model with the base $X_0$.}
\end{enumerate}\end{proposition}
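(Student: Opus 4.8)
The plan is to establish the three parts in order, using the descent machinery from Proposition \ref{prop1} and the rigidity statement of Proposition \ref{prop3} applied to the \emph{regular} cover $f^{\reg}$.

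\textbf{Part (i).} First I would observe that $\Aut(X,f)$ is a finite group acting on $X$ over $F$ (automorphisms of a cover of curves of genus $\geq 0$ commuting with $f$), so the quotient $X/\Aut(X,f)$ exists as a curve over $F$ and $f^{\reg}$ is the canonical projection. Since $f\circ\sigma=f$ for every $\sigma\in\Aut(X,f)$, the morphism $f$ is constant on $\Aut(X,f)$-orbits and hence factors \emph{uniquely} through the quotient, giving $f_0$ with $f=f_0\circ f^{\reg}$; uniqueness of $f_0$ follows because $f^{\reg}$ is an epimorphism of curves. By construction $f^{\reg}$ is a regular cover: $\Aut(X,f^{\reg})\supseteq\Aut(X,f)$ acts transitively on the generic fiber of $f^{\reg}$ by definition of the quotient.

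\textbf{Part (ii).} The key point is that the operation $(X,f)\mapsto(X/\Aut(X,f),f_0)$ is \emph{functorial} and commutes with the basechange functors $g_*$, because $g_*$ takes $\Aut(X,f)$ isomorphically onto $\Aut(g_*X,g_*f)$ and quotients are preserved by the (flat) basechange along the automorphism $g$. Therefore, given the descent data witnessing that $M$ is a field of moduli of $f$ — a family of cover isomorphisms $\rho(g):g_*(X,f)\to(X,f)$ for $g\in\Gal(F/M)$ satisfying the cocycle condition \ref{1} up to modification by automorphisms — passing to quotients kills the ambiguity coming from $\Aut(X,f)$: the induced maps $\bar\rho(g):g_*(X/\Aut(X,f),f_0)\to(X/\Aut(X,f),f_0)$ are \emph{canonical} (the factored cover $f_0$ has trivial automorphism group as a cover of $B$, since any automorphism of the quotient fixing $f_0$ would lift to the normalizer and, on the generic fiber, be forced to be the identity). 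Hence $\bar\rho$ is a genuine cocycle, and Proposition \ref{prop1} produces a canonical $M$-model $X_0\to B$ of $f_0$. I would note this is exactly the ``reduced cover'' situation: $X_0$ is intrinsic to $(X,f)$, not to a choice of model.

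\textbf{Part (iii).} That $M$ is a field of moduli of $f^{\reg}$ is immediate: applying $g_*$ to $f=f_0\circ f^{\reg}$ and using the canonical identification of $g_*(X/\Aut)$ with $X_0$ over $M$ (part (ii)) shows $g_*f^{\reg}$ is isomorphic to $f^{\reg}$ as a cover of $X_0$ for every $g\in\Gal(F/M)$. For the equivalence: if $f$ is defined over $M$, then an $M$-model $f_L=f_0^{(M)}\circ f^{\reg}_{(M)}$ descends together with its canonical factorization, and the reduced part gives an $M$-model of $f^{\reg}$ with base the canonical $X_0$. Conversely, given an $M$-model $X^{(M)}\to X_0$ of $f^{\reg}$, composing with $X_0\to B$ (which is an $M$-morphism by part (ii)) yields an $M$-morphism $X^{(M)}\to B$ whose extension of constants is isomorphic to $f$; this is the desired $M$-model of $f$.

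The main obstacle is the bookkeeping in Part (ii): one must check carefully that the isomorphisms $\rho(g)$ — which a priori are only defined up to post-composition with an element of $\Aut(X,f)$ and which need only satisfy the cocycle condition \emph{modulo} automorphisms (this failure being precisely Debes' cohomological obstruction) — nonetheless induce \emph{strict} cocycles after quotienting, so that Fact \ref{fact1}/Proposition \ref{prop1} applies to $f_0$ unconditionally. This hinges on the fact that $\Aut(X/\Aut(X,f),f_0)$ is trivial, which I would prove by a short argument on the generic fiber: an automorphism of the quotient commuting with $f_0$ permutes the (single) $\Aut(X,f)$-orbit above the generic point of $B$, hence is the identity there, hence everywhere.
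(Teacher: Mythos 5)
Your overall plan (induce descent data on the quotient and feed it into Proposition \ref{prop1}) is the right one, but the justification you explicitly identify as the crux of Part (ii) is wrong: the group $\Aut(X/\Aut(X,f),f_0)$ is \emph{not} trivial in general. For a concrete counterexample, let $f:X\to\bbP^1_F$ be a degree-$4$ cover whose Galois closure has group $D_4$ acting on the cosets of a non-normal subgroup $H$ of order $2$. Then $\Aut(X,f)\cong N(H)/H\cong\bbZ/2\bbZ$, and the reduced cover $X_0=X/\Aut(X,f)\to\bbP^1$ corresponds to the subgroup $N(H)$, which has index $2$ and is therefore normal; so $f_0$ is a Galois double cover with $\Aut(X_0,f_0)\cong\bbZ/2\bbZ\neq\{\id\}$. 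Your ``generic fiber'' argument fails at precisely this point: the fiber of $f_0$ over the generic point of $B$ is the set of $\Aut(X,f)$-orbits in the fiber of $f$, and this is a single orbit only when $f$ itself is regular, so an automorphism of $(X_0,f_0)$ is not forced to fix it. Hence you cannot obtain the strict cocycle for $f_0$ from rigidity of $f_0$ (equivalently, you cannot just invoke Proposition \ref{prop3}(i) for $f_0$).

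The repair is the simpler mechanism which you mention in passing and then abandon: canonicity of $\bar\rho(g)$ comes not from $f_0$ having no automorphisms, but from the fact that any two cover isomorphisms $\tilde\rho(g),\tilde\rho'(g):g_*(X,f)\to(X,f)$ differ by postcomposition with an element of $\Aut(X,f)$, and every element of $\Aut(X,f)$ induces the identity on the quotient $X/\Aut(X,f)$. Therefore the induced isomorphism $\bar\rho(g):g_*(X_0,f_0)\to(X_0,f_0)$ is independent of the choice of $\tilde\rho(g)$; applying this independence to $\tilde\rho(gh)$ versus $\tilde\rho(g)\circ g_*\tilde\rho(h)$ gives the cocycle condition (\ref{1}) on the nose, and Proposition \ref{prop1} (after the routine reduction to $[F:K]<\infty$) produces the canonical $M$-model $X_0\to B$. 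With this fix your Parts (i) and (iii) go through essentially as in the paper; note that the forward direction of (iii) uses the same observation, namely that the strict cocycle attached to an $M$-model of $f$ induces exactly the canonical descent data on $X_0$, so the reduced part of that model really is the canonical $X_0$.
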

        We call $(X_0,f_0)$ and $(X,f^{\reg})$ the \textit{reduced} and \textit{regular}
        part of the cover $(X,f)$, respectively.

This result is stated in \cite[Theorem 3.3]{DE99}. Although the authors provide a complete proof only for a similar statement about curves, the case of covers can be established in the same manner. Here is a sketch of the proof.  
\begin{proof}
We may restrict ourselves to the case of finite degree $[F:K]<\infty$ as in the proof of Proposition \ref{prop3}.

\begin{enumerate}[label=(\roman*)]

\item Obvious.
\item Given an element $g\in \Gal(F/M)$ we want to consider an isomorphism of covers $$\tilde{\rho}(g):g_*(X,f)\to (X,f).$$ It induces an isomorphism of the corresponding reduced covers $$\rho(g):g_*(X/\Aut(X,f))\to X/\Aut(X,f).$$ It is straightforward to check that this isomorphism is independent on the choice of $\tilde{\rho}(g)$ and therefore meets the cocycle condition of Proposition \ref{prop1}.
\item It is clear from the proof of (ii) that descent data on $(X,f)$ uniquely correspond to descent data on $(X,f^{\reg})$. Thus the result follows from Proposition \ref{prop1}.
\end{enumerate}
\end{proof}

\begin{corollary} \label{cor1}Using the notations introduced above, let $L\supset M$ be an extension of the field of moduli such that $X_0$ has a non-critical $L$-point. Then $f$ is defined over $L$.\end{corollary}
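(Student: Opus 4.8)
The plan is to deduce Corollary \ref{cor1} directly from part (iii) of Proposition \ref{prop4} together with part (ii) of Proposition \ref{prop3}, applied to the regular part of the cover. First I would recall the setup: $M$ is the field of moduli of $f$, and by Proposition \ref{prop4}(ii) the reduced part $f_0:X_0\to B_M$ is already given over $M$ (here I identify $X_0$ with its canonical $M$-model). By Proposition \ref{prop4}(iii), $M$ is a field of moduli of the regular cover $f^{\reg}:X\to X_0$ (base-changed to $F$), and $f$ is defined over any field $L$ precisely when $f^{\reg}$ admits an $L$-model whose base is $(X_0)_L$.

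\smallskip

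The key step is then to invoke Proposition \ref{prop3}(ii) for the cover $f^{\reg}:X\to (X_0)_L$ over the field $L$. This cover is regular by construction — $\Aut(X,f^{\reg})=\Aut(X,f)$ acts transitively on fibers over non-critical points, since $X\to X/\Aut(X,f)$ is the quotient map. The hypothesis of the corollary supplies a non-critical $L$-point $P$ on $X_0$, which is exactly the non-critical base point required by Proposition \ref{prop3}(ii). One subtlety to address: Proposition \ref{prop3}(ii) is stated for a cover of $B_F$ with base curve $B$ defined over $K$, whereas here the base is $X_0$ defined over $M\subset L$; but the proof of Proposition \ref{prop3}(ii) only uses that the base has an $L$-model and a non-critical $L$-point, so it applies verbatim with $L$ in place of $K$ and $X_0$ in place of $B$. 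Also I should check that $L$ is still a field of moduli of $f^{\reg}$ — this is immediate since $M\subset L$ and any field containing the field of moduli is again a field of moduli by definition.

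\smallskip

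Putting these together: Proposition \ref{prop3}(ii) yields an $L$-model of $f^{\reg}$ whose base is the chosen $L$-model of $X_0$, namely $(X_0)_L$. By the criterion in Proposition \ref{prop4}(iii), the existence of such a model is equivalent to $f$ being defined over $L$. Hence $L$ is a field of definition of $f$, which is the assertion. I expect the only real point requiring care to be the bookkeeping around which field plays the role of the "base field $K$" in the cited propositions — i.e. confirming that $X_0$ having a non-critical $L$-point is genuinely the input Proposition \ref{prop3}(ii) needs, and that no hidden rationality assumption on $X_0$ over $M$ (as opposed to over $L$) is being used. Everything else is a direct chain of the already-established results.
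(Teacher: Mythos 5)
Your proposal is correct and follows exactly the paper's route: the paper proves Corollary \ref{cor1} by combining Proposition \ref{prop4}(iii) with Proposition \ref{prop3}(ii) applied to the regular part $f^{\reg}:X\to X_0$, which is precisely your argument. The additional bookkeeping you supply (that $L$ is again a field of moduli, and that the base in Proposition \ref{prop3}(ii) may be taken over $L$) is sound and just makes explicit what the paper leaves implicit.
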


\begin{proof} It follows from Propositions \ref{prop4} (iii), \ref{prop3}(ii). \end{proof}

As a consequence of the general results above we can deduce that genus-zero covers are defined over a quadratic extension of their field of moduli \cite[Theorem 5]{C94}.
\begin{corollary}
Let $f:\bbP^1_{F}\to\bbP^1_{F}$ be a cover with a field of moduli $M$. Then there is an extension $L\supset M$ such that $f$ is defined over $L$ and $[L:M]\leq 2$.
\end{corollary}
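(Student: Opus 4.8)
The plan is to apply the reduced-cover machinery of Proposition~\ref{prop4} together with Corollary~\ref{cor1}. Given $f:\bbP^1_F\to\bbP^1_F$ with field of moduli $M$, decompose $f=f_0\circ f^{\reg}$ into its reduced part $f_0:X_0\to\bbP^1_M$ and regular part $f^{\reg}:\bbP^1_F\to X_0$. By Proposition~\ref{prop4}(ii) the curve $X_0$ carries a canonical $M$-model, and by Corollary~\ref{cor1} it suffices to produce a point on $X_0$ of degree at most $2$ over $M$ lying over a non-critical point of $f_0$.

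First I would identify $X_0$ as a genus-zero curve over $M$: since $X_0$ is dominated by $\bbP^1$ (over $F$), it is a smooth conic, i.e. a Brauer--Severi curve of dimension $1$ over $M$. Second, I would invoke the classical fact that a conic over any field always has a closed point of degree $1$ or $2$: indeed a smooth plane conic $C\subset\bbP^2_M$ meets a generic $M$-rational line in a zero-dimensional scheme of degree $2$, which is either two rational points, one rational point (tangent line, giving degree $1$), or a single closed point with residue field a quadratic extension $L/M$. In every case we get $L\supset M$ with $[L:M]\le 2$ and $X_0(L)\neq\varnothing$. Third, I would arrange that the chosen point avoids the finitely many critical points of $f_0$ — this is automatic for a generic choice of line, or one can enlarge $L$ by a further quadratic extension if needed (replacing $L$ by a quadratic extension keeps $[L:M]\le 2$ only if the first choice was already degree $1$; so more carefully, one picks the line so that the degree-$\le 2$ point it cuts out misses the critical locus, which is possible since there are infinitely many lines and only finitely many bad ones). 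Finally, Corollary~\ref{cor1} applied to this $L$ gives that $f$ is defined over $L$, completing the proof.

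The main obstacle is the third step: ensuring the low-degree point on $X_0$ lies over a \emph{non-critical} point of $f_0$. One must check that among the pencil of lines through a fixed point (or the full linear system of lines), the locus of lines whose intersection with the conic hits a preimage of a critical value is a proper closed condition, so a generic line works; this uses that $f_0$ has only finitely many critical values and $X_0$ is a curve, so only finitely many points of $X_0$ are excluded, while the conic has infinitely many degree-$\le 2$ points. Everything else is a direct citation of the results already established in the excerpt.
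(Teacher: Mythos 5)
Your argument is correct and takes essentially the same route as the paper: decompose $f$ via Proposition~\ref{prop4}, observe that the reduced base $X_0$ is a genus-zero curve (a conic) over $M$ and thus has a point over some extension $L$ with $[L:M]\le 2$, then conclude by Corollary~\ref{cor1}. Your generic-line argument for avoiding the critical locus works but can be simplified: once $X_0(L)\neq\varnothing$, the conic is $L$-isomorphic to $\bbP^1_L$, so it has infinitely many $L$-points while only finitely many points are critical for $f_0$.
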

\begin{proof} 
In this case $X_0=\bbP^1_F/\Aut(\bbP^1_F,f)$ is a curve of genus 0 over $M$ and therefore it has a point over some quadratic extension of $M$.
\end{proof}

\subsection {The obstruction}
In this section we construct a cohomological obstruction for a cover being defined over its field of moduli. We start with the following folklore construction:

Let $B$ be a curve over $K$. Consider a cover $f:X\to B_{F}$ with a field of moduli $M$.
Let $G_{f,M}$ be the set of pairs $(g,\rho)$ where $g$ is an element of $\Gal(F/M)$ and $\rho$ is an isomorphism of covers $g_*(X,f) \to (X,f)$. The group law on $G_{f,M}$ is defined as follows: $$(g,\rho_1)\cdot(h,\rho_2)=(gh,\rho_1\circ g_*(\rho_2)).$$
It can be easily checked that $G_{f,M}$ is indeed a group with respect to this operation. There is a natural projection $G_{f,M}\to \Gal(F/M)$ whose kernel coincides with the group of automorphisms of $f$. So there exists an exact sequence of groups 
\begin{equation}\label{sq}\begin{CD}
1 @>>> \Aut(X,f) @>>> G_{f,M} @>>> \Gal(F/M) @>>> 1
\end{CD}\end{equation}
\begin{proposition}\label{prop5} In the setting introduced above we assume furthermore that $[F:K]<\infty$. Then the sequence \eqref{sq} splits iff $(X,f)$ is defined over $M$.\end{proposition}
\begin{proof}
The map $G_{f,M} \to \Gal(F/M)$ has a section iff a family of cover isomorphisms $\rho(g)\colon g_*(X,f) \to (X,f)$ for each $g\in \Gal(F/M)$ can be chosen to fit the cocycle condition of Proposition \ref{prop1}.
\end{proof}
Proposition \ref{prop5} allows us to construct a cohomological obstruction for covers with an abelian group of automorphisms. For another cohomological approach in a more general context see \cite{DD97}.
\begin{corollary}\label{cor3} Let $B$ be a curve over $K$. Consider a cover $f:X\to B_{F}$ with an abelian group of automorphisms. Then the obstruction for a field of moduli $M$ to be a field of definition is an element $\sigma \in H^2(\Gal(F/M),\Aut(X,f))$. 
\end{corollary}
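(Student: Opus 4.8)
The plan is to read off the obstruction directly from the group extension produced in Proposition \ref{prop5} and to classify it by the standard theory of extensions with abelian kernel. As in the proofs of Propositions \ref{prop3} and \ref{prop4}, I would first reduce to the case $[F:K]<\infty$, since any cover over $F$ is already defined over a finite subextension and the field of moduli lies inside it; in the general case one works with continuous cohomology, and the $2$-cocycle below is automatically continuous, being inflated from a finite quotient. Proposition \ref{prop5} then furnishes the exact sequence
\begin{equation*}
1 \longrightarrow A \longrightarrow G_f \longrightarrow \Gal(F/M) \longrightarrow 1, \qquad A:=\Aut(X,f),
\end{equation*}
with $A$ abelian by hypothesis.

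Next I would endow $A$ with the structure of a $\Gal(F/M)$-module. Conjugation in $G_f$ makes $A$ a $G_f$-module, and since $A$ is abelian it acts trivially on itself, so this action descends to $G_f/A\cong\Gal(F/M)$. Concretely, for $g\in\Gal(F/M)$ pick any isomorphism of covers $\rho\colon g_*(X,f)\to(X,f)$ (one exists because $M$ is a field of moduli) and set $g\cdot a=\rho\circ g_*(a)\circ\rho^{-1}$; a direct computation with the group law of $G_f$ identifies this with conjugation by the element $(g,\rho)$, and replacing $\rho$ by another lift $\rho'=b\circ\rho$ with $b\in A$ changes nothing because $A$ is commutative. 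Hence the module structure is canonical, i.e. independent of all choices.

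Finally I would invoke the classical classification: equivalence classes of extensions of a group $Q$ by a fixed $Q$-module $A$ are in bijection with $H^2(Q,A)$, the zero class corresponding exactly to the extensions that admit a section. Applying this to the sequence above, let $\sigma\in H^2(\Gal(F/M),A)$ be the class of $G_f$. By the last assertion of Proposition \ref{prop5}, $G_f$ splits if and only if $(X,f)$ is defined over $M$; therefore $\sigma=0$ if and only if $M$ is a field of definition, which is precisely the statement that $\sigma$ is the obstruction. (If desired, one can make $\sigma$ explicit as the class of the cocycle $(g,h)\mapsto \rho(g)\circ g_*\rho(h)\circ\rho(gh)^{-1}$ attached to any choice of isomorphisms $\rho(g)\colon g_*(X,f)\to(X,f)$.)

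I do not expect any serious difficulty here: the corollary is essentially a repackaging of Proposition \ref{prop5}. The only points requiring care are the verification that the conjugation action on $A$ is well defined and canonical — handled by the commutativity of $A$, as above — and, in the infinite-degree setting, the continuity of the extension class, which follows from the finiteness of $A$ and of any field of definition.
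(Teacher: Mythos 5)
Your proposal is correct and follows the same route as the paper: classify the extension of Proposition \ref{prop5} by the standard theory of group extensions with abelian kernel, so that the class in $H^2(\Gal(F/M),\Aut(X,f))$ vanishes exactly when the sequence splits, i.e.\ when $(X,f)$ is defined over $M$, with the infinite case handled by passing to finite subextensions (the paper phrases this via the profinite limit defining $H^2$). Your explicit verification that the conjugation action on $\Aut(X,f)$ is canonical is a detail the paper leaves implicit, but it is the same argument.
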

In the case of an infinite Galois extension $K\subset F$, Galois cohomology groups should be understood in the profinite sense.
\begin{proof}
If $K\subset F$ is finite it is a standard fact that group extensions with abelian kernels are classified by the second cohomology group, see \cite[Theorem 3.12]{B82}. In the infinite case the statement follows from the definition of profinite cohomology groups $$H^2(\Gal(F/M),\Aut(X,f)):=\mathop{\lim_{\longleftarrow}}_{L} H^2(\Gal(L/M),\Aut_L(X,f)).$$
where the limit is taken over all finite subextensions $L\supset K$.
\end{proof}
\begin{remark} Let $L\supset M$ be a larger field of moduli of the cover $f:X\to B_{F}$. Then $G_{f,L}$ coincides with the preimage of $\Gal(F/L)\subset \Gal(F/M)$ under the projection $G_{f,M} \to \Gal(F/M)$. In other words, we have a commutative diagram
\begin{equation*}
\begin{CD}
1 @>>> \Aut(X,f) @>>> G_{f,L} @>>> \Gal(F/L) @>>> 1\\
@. @| @VVV @VVV @. \\
1 @>>> \Aut(X,f) @>>> G_{f,M} @>>> \Gal(F/M) @>>> 1
\end{CD}\end{equation*}
where the rows are exact and all squares are Cartesian. Hence, the group extension in the first row is the pullback of the group extension in the second row under the map $\Gal(F/L)\to \Gal(F/M)$. If the group of automorphisms $\Aut(X,f)$ is abelian, this property implies that the obstruction $\sigma\in H^2(\Gal(F/M),\Aut(X,f))$ constructed in Corollary \ref{cor3} is functorial in the following sense: the image of $\sigma$ under $$\res^M_L:H^2(\Gal(F/M),\Aut(X,f))\to H^2(\Gal(F/L),\Aut(X,f))$$ is precisely the obstruction corresponding to the field of moduli $L$. In particular, the cover $f$ is defined over $L$ iff $\res^M_L(\sigma)=0$.
\end{remark}

\section {Covers of genus one}
By \textit{a cover of genus one} we understand a cover $X\to \bbP^1$ where $X$ is a curve of genus one. 
In this section we focus on the absolute case $F=\algK \supset K$.
\newline
\textbf{Notation.} $G_L=\Gal (\algK/L)$ is the absolute Galois group of a field $L$.

\begin{proposition}\label{prop7} Let $f:X \to \bbP^1_{\algK}$ be a cover of genus one with a field of moduli $M$. Recall that $X_0=X/\Aut(X,f)$ is the canonical model of the reduced part of $f$ from Proposition \ref{prop4}. Then exactly one of the following holds true:
\begin{enumerate}
    \item {Either $\Aut(X,f)$ acts freely on $X$ and $X_0$ has genus one (in this case $\Aut(X,f)$ may be viewed as a finite subgroup of the elliptic curve $X$ with a properly chosen basepoint); Or}
    \item {$X_0$ has genus zero and $(X,f)$ is defined over a quadratic extension of $M$.}
\end{enumerate}\end{proposition}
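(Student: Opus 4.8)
The plan is to read off the dichotomy directly from the Riemann–Hurwitz formula applied to the quotient map, and then to dispatch the two resulting cases using, respectively, the structure of isogenies and Corollary~\ref{cor1}.

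First I would apply Riemann–Hurwitz to the quotient morphism $f^{\reg}\colon X\to X_0=X/\Aut(X,f)$. It is a finite morphism of smooth proper curves of degree $n:=|\Aut(X,f)|$, separable since the characteristic is zero, so writing $R\ge 0$ for the total ramification one gets
\[
0 \;=\; 2g(X)-2 \;=\; n\bigl(2g(X_0)-2\bigr)+R .
\]
In particular $2g(X_0)-2=-R/n\le 0$, so $g(X_0)\in\{0,1\}$. Moreover $f^{\reg}$ is Galois with group $\Aut(X,f)$, hence the action is free exactly when $f^{\reg}$ is unramified, i.e.\ when $R=0$; and $R=0$ is equivalent to $g(X_0)=1$, whereas $R>0$ is equivalent to $g(X_0)=0$. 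This already yields the two mutually exclusive and exhaustive alternatives, so it remains to verify the additional assertion in each case.

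In the free case I would pick an arbitrary point $O\in X$ and set $O_0:=f^{\reg}(O)$, making $X$ and $X_0$ elliptic curves with these origins. Since a non-constant morphism of elliptic curves sending origin to origin is a homomorphism, $f^{\reg}$ becomes an isogeny, so $\ker f^{\reg}$ is a finite subgroup of $X$ of order $\deg f^{\reg}=n$ and translations by its elements are deck transformations of $f^{\reg}$. This gives an injection $\ker f^{\reg}\hookrightarrow\Aut(X,f)$ — note that $\Aut(X,f)$ coincides with the deck group of $f^{\reg}$, since $f^{\reg}$ is by construction the quotient of $X$ by $\Aut(X,f)$ — and since both groups have order $n$ it is an isomorphism, which is exactly the desired identification of $\Aut(X,f)$ with a finite subgroup of $X$.

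In the non-free case $g(X_0)=0$. By Proposition~\ref{prop4}(ii), whose hypotheses are met because $M$ is a field of moduli of $f$, the curve $X_0$ has a canonical $M$-model; being a smooth genus-zero curve over $M$ it is a conic, hence carries infinitely many closed points of degree at most $2$ (intersect it with lines defined over $M$). Removing the finitely many critical points of $f_0\colon X_0\to\bbP^1$, I can choose such a point with residue field $L$, $[L:M]\le 2$, and then Corollary~\ref{cor1} gives that $f$ is defined over $L$. The only mildly delicate points are the order comparison in the free case and keeping the chosen quadratic point off the branch locus in the non-free case; I do not expect a genuine obstacle, as the proposition is essentially a repackaging of Riemann–Hurwitz with the descent machinery already established.
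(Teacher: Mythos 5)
Your proposal is correct and follows essentially the same route as the paper: Riemann--Hurwitz applied to $f^{\reg}$ gives $g(X_0)\in\{0,1\}$ with freeness equivalent to $g(X_0)=1$, the free case is handled by viewing $f^{\reg}$ as an isogeny whose kernel is $\Aut(X,f)$, and the genus-zero case by a quadratic point on $X_0$ together with Corollary \ref{cor1}; you merely spell out more details (the order count identifying the deck group with $\ker f^{\reg}$, and the conic argument). One tiny slip: the finite set to avoid when choosing the quadratic point is the branch locus of $f^{\reg}$ in $X_0$ (this is the non-criticality required by Corollary \ref{cor1} via Proposition \ref{prop3}(ii)), rather than the critical points of $f_0$, but this does not affect the argument.
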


\begin{proof} Let $d$ be the degree of $f^{\reg}$, let $e_P$ be the ramification index of $f^{\reg}$ at a point $P$ and let $g(X_0)$ stand for the genus. Then by Riemann-Hurwitz formula for $f^{\reg}$ $$d(2-2g(X_0))=\sum_{P\in X}(e_P-1)\geq 0.$$ Hence we deduce $g(X_0)\in \{0,1\}$. The cover $f^{\reg}$ is unramified (i.e $e_P=1$ at each $P\in X$) iff $\Aut(X,f)$ acts on $X$ without fixed points i.e. freely. 

If $X_0$ has genus one, the cover is a surjective morphism of genus-one curves which can be viewed as an isogeny of elliptic curves with properly chosen basepoints. Then $\Aut(X,f)$ is the kernel of the corresponding isogeny.  

Otherwise the cover is ramified and $X_0$ has genus zero. Then it has a point over a quadratic extension of the field of moduli and the statement follows from Corollary \ref{cor1} of Proposition \ref{prop4}.\end{proof}

As we are looking for covers with an arbitrary large degree of a field of definition over the field of moduli, we may restrict ourselves to the first case of Proposition \ref{prop7}.
In the rest of this section we show how the first case of Proposition \ref{prop7} can be reduced to the lifting of some cohomological class. 

Let $C$ be a curve over $\algK$. In order to define an action of the Galois group $\Gal(\algK/M)$ on the category of covers of $C$ we need to fix an $M$-model of $C$ and the resulting action depends on the chosen model. Therefore, the absolute field of modui of a cover $f:X\to C$ also depends on the chosen $M$-model of $C$. However, we prove that the absolute field of moduli of an isogeny of elliptic curves does not change when the base curve is replaced by any of its torsors.

\begin{lemma}\label{lemma0}
Let $E_0$ be an elliptic curve over a field $M$ and let $X_0$ be an $M$-torsor of $E_0$. Let us fix an arbitrary isomorphism of torsors $i\colon (E_0)_{\overline{K}}\iso (X_0)_{\overline{K}}$. Consider an isogeny $\phi:E\to (E_0)_{\overline{K}}$. Then the absolute field of moduli of the cover $\phi:E\to (E_0)_{\overline{K}}$ coincides with the absolute field of moduli of the cover $i\circ \phi:E\to (X_0)_{\overline{K}}$.  
\end{lemma}
\begin{proof} We denote the descent datum on $E_0$ by $\rho_{E_0}$. After the identification $i\colon (E_0)_{\overline{K}}\iso (X_0)_{\overline{K}}$, the descent datum on $E_0$ corresponding to the $M$-torsor $X_0$ has the form $\rho_{X_0}(g)=\rho_{E_0}(g)+\tau_0(g)$ where $\tau_0\in H^1(\Gal(\overline{K}/M),E_0)$.
Let us consider an element $g\in \Gal(\overline{K}/M)$. It is sufficient to demonstrate that $g_*(E,\phi)$ and $(E,\phi)$ are isomorphic as covers with the $M$-base $E_0$ iff they are isomorphic as covers with the $M$-base $X_0$. The first condition is equivalent to the existence of a suitable isomorphism $\rho:g_*E\to E$ such that the following diagram is commutative:
\begin{equation}\label{3}
    \xymatrixcolsep{5pc} \xymatrix{
g_*E \ar[d]_{g_*\phi} \ar[r]^{\rho} &E\ar[d]^\phi\\
g_*E_0 \ar[r]_{\rho_{E_0}(g)} &E_0}
\end{equation} Similarly the second condition is equivalent to the existence of another suitable isomorphism $\tilde{\rho}:g_*E\to E$ such that the diagram below is commutative: \begin{equation}\label{4} \xymatrixcolsep{5pc} \xymatrix{
g_*E \ar[d]_{g_*\phi} \ar[r]^{\tilde{\rho}} &E\ar[d]^\phi\\
g_*E_0 \ar[r]_{\rho_{E_0}(g)+\tau_0(g)} &E_0}
\end{equation} Let us pick some preimage $\tau$ of $\tau_0(g)$ under the isogeny $\phi$. Then one easily verifies that given any $\rho$ fitting into (\ref{3}), $\tilde{\rho} = \rho+\tau$ fits into (\ref{4}): 
$$\phi\circ\tilde\rho =\phi \circ (\rho +\tau)=\phi\circ\rho+\phi(\tau)=\phi\circ\rho+\tau_0(g)=\rho_{E_0}(g)\circ g_*\phi+\tau_0(g).$$
Similarly, given $\tilde{\rho}$ fitting into (\ref{4}), $\rho = \tilde{\rho}-\tau$ fits into (\ref{3}):
$$\phi\circ\rho =\phi \circ (\tilde\rho -\tau)=\phi\circ\tilde\rho-\tau_0(g)=\rho_{E_0}(g)\circ g_*\phi+\tau_0(g)-\tau_0(g)=\rho_{E_0}(g)\circ g_*\phi.$$
\end{proof}
Now we prove a technical lemma.
\begin{lemma}\label{lemma1}Let $X_0$ be a curve of genus one over a field $M$, and consider a finite map of genus-one curves $g:X\to (X_0)_{\overline{K}}$. Suppose $M$ is its field of moduli. Denote the Jacobian of $X_0$ by $E_0$. Then there exist an elliptic curve $E$ over $M$ and an isogeny $\phi:E\to E_0$ defined over $M$ which is isomorphic to $g$ over $\overline{K}$ after an identification of $(X_0)_{\overline{K}}$ and $(E_0)_{\overline{K}}$, the curve $E$ and the isogeny $\phi$ are uniquely determined. Moreover, if $(X,g)$ has an $M$-model $X_M \to X_0$, then $E$ is $M$-isomorphic to the Jacobian of $X_M$. \end{lemma}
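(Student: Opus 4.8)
The plan is to rigidify $g$ by marking a point on its source, thereby reducing the statement to Galois descent for a cover with no automorphisms, and then to identify the descended object using the functoriality of Jacobians.

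First I would pass to the Jacobian of the base. Since $g$ is an isogeny, $X$ and $(X_0)_{\algK}$ carry origins; identifying $(X_0)_{\algK}\iso(E_0)_{\algK}$ by sending the origin of $(X_0)_{\algK}$ to $O_{E_0}$ turns $g$ into an isogeny $g\colon X\to(E_0)_{\algK}$ with $g(O_X)=O_{E_0}$, where $O_X$ is the origin of $X$. By Lemma \ref{lemma0} the absolute field of moduli of $g$ as a cover with base $E_0$ equals the one with base $X_0$, so $M$ remains a field of moduli of $g\colon X\to(E_0)_{\algK}$. From now on the base is the elliptic curve $E_0$ over $M$, whose canonical descent data $\rho_{E_0}$ fixes the $M$-rational origin $O_{E_0}$.

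Next I would prove existence by passing to the pointed cover $(X\xrightarrow{g}(E_0)_{\algK},\,O_X)$. It has no non-trivial automorphisms: an automorphism fixing $O_X$ is an endomorphism $\sigma$ of the elliptic curve $X$ with $g\circ\sigma=g$, so the morphism $\sigma-\id$ has image in the finite group $\ker g$, which, being connected, is trivial; hence $\sigma=\id$. Moreover $M$ is the field of moduli of this pointed cover: for $h\in G_M$ pick a cover isomorphism $\rho(h)\colon h_*(X,g)\to(X,g)$; evaluating $g\circ\rho(h)=\rho_{E_0}(h)\circ h_*(g)$ at $h_*O_X$ and using $g(O_X)=O_{E_0}$ together with the $M$-rationality of $O_{E_0}$ gives $\rho(h)(h_*O_X)\in\ker g$, so the translate $t_{-\rho(h)(h_*O_X)}\circ\rho(h)$ — still a cover isomorphism, since translating by a kernel element leaves the map to $E_0$ unchanged — is an isomorphism of pointed covers $h_*(X,g,O_X)\to(X,g,O_X)$. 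As $\Aut(X,g,O_X)=\{\id\}$, this isomorphism is unique for each $h$, so these data satisfy the cocycle condition \eqref{1} automatically, and Proposition \ref{prop1} applied to pointed covers (the pointed analogue of Proposition \ref{prop3}(i)) yields a unique $M$-model $(E\xrightarrow{\phi}E_0,\,O_E)$. Taking $O_E$ as the origin of $E$, the isogeny $\phi_{\algK}=g$ preserves origins, so $\phi$ is an $M$-isogeny. For uniqueness, if $E'$ is another elliptic curve over $M$ with an $M$-isogeny $\phi'\colon E'\to E_0$ becoming isomorphic to $g$ over $\algK$, then post-composing an isomorphism of covers $(E'_{\algK},\phi')\to(X,g)$ with the translation by the resulting element of $\ker g$ exhibits $(E',\phi',O_{E'})$ as an $M$-model of $(X,g,O_X)$, whence $E\cong_M E'$ compatibly with the isogenies. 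Finally, for the supplementary claim, let $g_M\colon X_M\to X_0$ be an $M$-model of the cover with base $X_0$; the pushforward homomorphism $(g_M)_*\colon\mathrm{Jac}(X_M)\to\mathrm{Jac}(X_0)=E_0$ is an $M$-isogeny whose base change to $\algK$ is $g_*\colon\mathrm{Jac}(X)\to\mathrm{Jac}((X_0)_{\algK})$, and under the standard isomorphisms $X\iso\mathrm{Jac}(X)$, $P\mapsto[P]-[O_X]$ (carrying $O_X$ to the origin) and $(X_0)_{\algK}\iso\mathrm{Jac}((X_0)_{\algK})=(E_0)_{\algK}$ the map $g_*$ goes over to $g$; hence $(\mathrm{Jac}(X_M),(g_M)_*)$, with the origin of $\mathrm{Jac}(X_M)$ as marked point, is an $M$-model of the pointed cover $(X,g,O_X)$, so $\mathrm{Jac}(X_M)\cong_M E$.

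The main obstacle is the rigidification: the cover $g$ itself has automorphisms — the translations by $\ker g$ — so it need not carry a distinguished $M$-model with base $E_0$; marking one point on the source removes all automorphisms while staying compatible with the Galois action, precisely because the image $O_{E_0}$ of that point is $M$-rational. The delicate points are that the field-of-moduli isomorphisms can be corrected so as to respect the marking, and that this correction is a translation by an element of $\ker g$ (hence harmless for the cover structure); granting this, the remainder is formal Galois descent (Fact \ref{fact1}, Proposition \ref{prop1}) together with the base-change compatibility of Jacobians.
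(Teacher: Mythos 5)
Your proof is correct and takes essentially the same route as the paper: transfer the field of moduli to the base $E_0$ via Lemma \ref{lemma0}, then descend the cover after rigidifying by a marked point over the $M$-rational origin --- you simply unpack the proof of Proposition \ref{prop3}(ii), which the paper cites directly. For the last claim the paper uses the dual of the pullback $J(X_0)\to J(X_M)$ where you use the pushforward on Jacobians, but these give the same $M$-isogeny, so the arguments coincide.
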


\begin{proof}
From Lemma \ref{lemma0} we deduce that $M$ is a field of moduli of $\phi$ with respect to the $M$-base $E_0$.
Then the first claim follows from Proposition  \ref{prop3}(ii).

Let $g_M:X_M\to X_0$ be an $M$-model, we may consider the induced map on Jacobians $J(X_0)\to J(X_M)$. It is defined over $M$. Then the dual isogeny $J(X_M)\to J(X_0)=E_0$ is also defined over $M$ and is isomorphic to $(X,g)$ over $\overline{K}$. From the uniqueness part of the first claim it follows that $J(X_M)$ is isomorphic to $E$ over $M$.
\end{proof}

Let $f:X \to \bbP^1_{\algK}$ be a cover of genus 1 with a field of moduli $M$. We assume  $\Aut(X,f)$ acts freely on $X$. The genus-one curve $X_0$ is defined over $M$. Now we can apply Lemma \ref{lemma1} to obtain an $M$-isogeny of elliptic curves $\phi: E\to E_0=J(X_0)$ which is isomorphic to $f^{\reg}$ over $\algK$. Then there is an exact triple of $G_M$-modules:
$$
\xymatrix{
  0 \ar[r] & \Aut(f)  \ar[r] & E  \ar[r]^-{\phi} & E_0  \ar[r] & 0. }$$

It induces an exact sequence of Galois cohomology groups

$$
\xymatrix{
  H^1(G_M,E) \ar[r] & H^1(G_M,E_0)  \ar[r]^-{\delta} & H^2(G_M,\Aut(f)). }$$

\begin{proposition}\label{prop8}  In the setting introduced above there is a canonically defined class 
$\tau_0 \in H^1(G_M,E_0)$. The cover is defined over $M$ iff $\tau_0$ can be lifted to $\tau\in H^1(G_M,E)$. The image of $\tau_0$ under the boundary map $\delta$ coincides with the obstruction of Corollary \ref{cor3}. \end{proposition}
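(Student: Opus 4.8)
\emph{Proof plan.} The canonical class $\tau_0$ will be the class of $X_0$ regarded as a torsor under its Jacobian $E_0=J(X_0)$. To establish the two remaining assertions the plan is to make the identifications of Lemma \ref{lemma1} explicit at the level of descent data. Since $\Aut(X,f)$ acts freely, fixing a basepoint turns $X$ into an elliptic curve — which we take to be the curve $E$ of Lemma \ref{lemma1} — turns $f^{\reg}$ into the isogeny $\phi\colon E\to E_0$, and identifies $\Aut(f)$ with $\ker\phi\subset E$ acting by translations; this identification is $G_M$-equivariant because each descent datum $\rho_E(g)$ is a group automorphism and hence conjugates translation by $c$ to translation by ${}^g c$. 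Fix an $\algK$-isomorphism $(X_0)_{\algK}\iso (E_0)_{\algK}$ as in Lemma \ref{lemma1}; under it the $M$-structure of $X_0$ becomes that of $E_0$ twisted by a $1$-cocycle, so that the descent datum of $X_0$ takes the form $g\mapsto \rho_{E_0}(g)+\tau_0(g)$ (composition of $\rho_{E_0}(g)$ with translation by $\tau_0(g)$) for some $\tau_0\in Z^1(G_M,E_0(\algK))$ whose class is the canonical one and is independent of the chosen $\algK$-isomorphism.

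\emph{The lifting criterion.} By Proposition \ref{prop4}(iii), $f$ is defined over $M$ iff $f^{\reg}$ admits an $M$-model with base $X_0$; by Proposition \ref{prop1} and the identifications above, this amounts to a family of cover-isomorphisms $\rho(g)\colon g_*(E,\phi)\to(E,\phi)$ with $\phi\circ\rho(g)=(\rho_{E_0}(g)+\tau_0(g))\circ g_*\phi$ satisfying the cocycle condition \ref{1}. Because $\phi$ is defined over $M$ we have $\rho_{E_0}(g)\circ g_*\phi=\phi\circ\rho_E(g)$, and because every cover-automorphism of $(E,\phi)$ over $\algK$ is translation by an element of $\ker\phi$, the first condition holds exactly when $\rho(g)=\rho_E(g)+\tilde\tau(g)$ for some $\tilde\tau(g)\in E(\algK)$ with $\phi(\tilde\tau(g))=\tau_0(g)$. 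A short computation — using that each $\rho_E(g)$ is a group automorphism and that $\rho_E$ satisfies its own (split) cocycle condition — shows that \ref{1} holds for $\rho$ iff $\tilde\tau$ is a $1$-cocycle with values in $E(\algK)$. Thus $M$-models of $f^{\reg}$ with base $X_0$ correspond to $1$-cocycles $\tilde\tau$ lifting $\tau_0$, and such a cocycle exists iff the class $\tau_0$ lifts to $H^1(G_M,E)$ — one implication being immediate, the other following by correcting an arbitrary lift of the class by the coboundary attached to a preimage in $E$ of the relevant $0$-cochain of $E_0$. By exactness this is equivalent to $\delta(\tau_0)=0$.

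\emph{Identification of the obstruction.} By Proposition \ref{prop4} the correspondence between descent data on $(X,f)$ and on $(X,f^{\reg})$ with base $X_0$ is $\Aut(f)$-equivariant, hence identifies the extension $G_f$ of Proposition \ref{prop5} with the extension assembled from the cover-isomorphisms $\rho(g)$ above. A set-theoretic section of the latter is obtained by choosing, for each $g$, an arbitrary preimage $\tilde\tau(g)\in\phi^{-1}(\tau_0(g))$ and taking $g\mapsto (g,\;\rho_E(g)+\tilde\tau(g))$; the computation of the previous paragraph shows that the associated $2$-cocycle is $(g,h)\mapsto \tilde\tau(g)+{}^g\tilde\tau(h)-\tilde\tau(gh)\in\ker\phi$. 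This is precisely the standard representative of the image of $\tau_0$ under the connecting homomorphism $\delta$ for $0\to\ker\phi\to E\to E_0\to 0$. Hence the class of $G_f$ in $H^2(G_M,\Aut(f))$, which by Corollary \ref{cor3} is the obstruction, equals $\delta(\tau_0)$. (As in earlier proofs one may first reduce to a finite Galois subextension, or equivalently work throughout with continuous cochains; the formulas are unchanged.)

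\emph{Main difficulty.} The substantive point is the bookkeeping in the last two steps: tracking how the $\algK$-identifications of Lemma \ref{lemma1} and the torsor twist $\tau_0$ interact with the cocycle condition \ref{1}, and verifying that the $G_M$-module structure on $\Aut(f)$ implicit in Proposition \ref{prop5} and Corollary \ref{cor3} is the natural one on $\ker\phi\subset E$. Everything ultimately reduces to the elementary observation that group homomorphisms — the descent data $\rho_E(g)$ and the isogeny $\phi$ — conjugate, respectively intertwine, translations.
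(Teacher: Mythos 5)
Your proposal is correct and follows essentially the same route as the paper: take $\tau_0$ to be the torsor class of $X_0$, write descent data on the regular part as $\rho_E(g)+\tilde\tau(g)$ with $\phi(\tilde\tau(g))=\tau_0(g)$, and identify the extension $G_f$ of Proposition \ref{prop5} with the standard description of $\delta(\tau_0)$. Your explicit handling of the cocycle-versus-class lifting adjustment is a point the paper leaves implicit, but it does not change the argument.
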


\begin{proof}
The class $\tau_0 \in H^1(G_M,E_0)$ corresponds to $X_0$ which forms a principal homogeneous space of its Jacobian $J(X_0)=E_0$. Let $\rho_E, \rho_{E_0}$ be the descend data corresponding to $E,E_0$. Then $X_0$ is defined by the descend datum $\rho_{E_0}+\tau_0$. If the cover $f^{\reg}$ has an $M$-model $X_M\to X_0$ we already know that $E$ is the Jacobian of $X_M$ (Lemma \ref{lemma1}), so $X_M$ is a torsor of $E$ defined by a descend datum of the form $\rho_E+\tau$ where $\tau\in H^1(G_M,E)$. The cover $f^{\reg}$ should be compatible with the descent data. In other words the following diagram is commutative:
$$\xymatrixcolsep{5pc} \xymatrix{
g_*E \ar[d]_{g_*\phi} \ar[r]^{\rho_E(g) +\tau(g)} &E\ar[d]^\phi\\
g_*E_0 \ar[r]_{\rho_{E_0}(g)+\tau_0(g)} &E_0}
$$
 This means that the following equality holds for any $x\in g_*E, g\in G_M$: 
 \begin{equation}\label{eq5}
\phi(\rho_E(g)(x)+\tau(g))=\rho_{E_0}(g_*\phi(x))+\tau_0(g). \end{equation}
Since $\phi:E\to E_0$ is defined over $M$, it is compatible with $\rho_E,\rho_{E_0}$ i.e. \begin{equation}\label{eq6}
    \phi\circ\rho_E(g)=\rho_{E_0}\circ g_*\phi.\end{equation}
Hence \eqref{eq5} is equivalent to the equality $\phi(\tau(g))=\tau_0(g)$. The latter equality implies that $\tau_0$ is the image of $\tau$ under the map $H^1(G_M,E) \to H^1(G_M,E_0)$.

The image of $\tau_0$ in $H^2(G_M,\Aut(f))$ corresponds to the exact sequence $$
\xymatrix{
  0 \ar[r] & \Aut(f)  \ar[r] & G_M\times_{\tau_0} E  \ar[r] & G_M \ar[r] & 0 }$$
  where $ G_M\times_{\tau_0} E=\{(g,p)\in G_M\times E|\tau_0(g)=\phi(p)\}$. We need to show that this group coincides with $G_{f,M}$ from the exact sequence \eqref{sq}. It suffices to verify that the condition $\tau_0(g)=\phi(p)$ is equivalent to the fact that $\rho_E(g)+p:g_*E\to E$ is an isomorphism of covers $g_*(E,\phi)\to(E,\phi)$. Indeed, $\rho_E(g)+p$ is an isomorphism of covers when the following compatibility condition holds for any $x\in g_*E$ (cf. the diagram above)
  \begin{equation*}
\phi(\rho_E(g)(x)+p)=\rho_{E_0}(g_*\phi(x))+\tau_0(g). \end{equation*}
From \eqref{eq6} we deduce that the latter is equivalent to $\phi(p)=\tau_0(g)$.
  \end{proof}

\section{The main construction } 
The aim of this section is to construct examples of Belyi functions such that the degree of a minimal field of definition over the field of moduli is large. In the previous section we have seen that the problem of defining a genus-one cover over the field of moduli reduces to some isogeny of elliptic curves. Here we consider the case of the simplest isogeny, namely, multiplication by $m$. First we show that under some assumptions on a finite subgroup $C$ of an elliptic curve $E$ any element in $H^2(G_M,C)$ can be obtained as the obstruction of Corollary \ref{cor3}. Then we demonstrate the existence of an appropriate element in $H^2(G_M,E[m])$ which does not split over extensions of small degree. 

The base field $K$ is assumed to be a number field, $K\subset F = \algQ$.
We need a fact from arithmetic geometry \cite[Corollary 6.24, Corollary 3.4]{M06}:
  \begin{proposition}\label{prop}
  For any elliptic curve $E$ over a number field $M$ there  is a functorial isomorphism
  $$H^2(G_M,E)\iso \prod_{\nu\; real}H^2(G_{M_\nu},E)$$ where $\nu$ runs through all real places of $M$. For real places $\nu$, we have $$H^2(G_{M_\nu},E)= \begin{cases} 
      \bbZ/2\bbZ & \text{if} \; \Delta(E)> 0 \\
      0 & \text{if}\; \Delta(E)<0 
   \end{cases}$$ \end{proposition}
\begin{corollary}\label{lemma2}
Consider an $M$-isogeny $\phi:E\to E_0$ of elliptic curves over $M$ such that $\deg(\phi)$ is odd. Then it induces an isomorphism  $$H^2(G_M,E)\to H^2(G_M,E_0).$$
\end{corollary}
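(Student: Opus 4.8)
The plan is to use Proposition \ref{prop} to identify both cohomology groups with products of copies of $\bbZ/2\bbZ$ indexed by the real places of $M$, and then show that the map induced by $\phi$ is compatible with this identification and acts as an isomorphism at each real place. Concretely, by Proposition \ref{prop} we have functorial isomorphisms $H^2(G_M,E)\iso\prod_{\nu\;\mathrm{real}}\bbZ/2\bbZ$ and $H^2(G_M,E_0)\iso\prod_{\nu\;\mathrm{real}}\bbZ/2\bbZ$, and the isogeny $\phi$ induces a commutative square relating $\phi_*$ on the global groups to the product of the local maps $\phi_*:H^2(G_{M_\nu},E)\to H^2(G_{M_\nu},E_0)$. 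So it suffices to prove that for each real place $\nu$, the map $H^2(G_{M_\nu},E)\to H^2(G_{M_\nu},E_0)$ is an isomorphism of groups isomorphic to $\bbZ/2\bbZ$.

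For a real place $\nu$, the group $G_{M_\nu}=\Gal(\bbC/\bbR)$ has order $2$, so $H^2(G_{M_\nu},E)=\hat H^0(G_{M_\nu},E)=E(\bbR)/N E(\bbC)$ where $N$ is the norm (i.e. $1+c$ for $c$ complex conjugation); this is the group of connected components $\pi_0(E(\bbR))$, which has order $1$ or $2$. Since Proposition \ref{prop} tells us the product is over real places and each contributes a $\bbZ/2\bbZ$, in the relevant situation this local group is indeed $\bbZ/2\bbZ$; in any case $\phi$ induces a homomorphism $\pi_0(E(\bbR))\to\pi_0(E_0(\bbR))$ and I must show it is an isomorphism. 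The key point is the degree hypothesis: $\deg(\phi)$ is odd. Either I argue directly that an odd isogeny of real elliptic curves induces an isomorphism on $\pi_0(E(\bbR))\cong\bbZ/2\bbZ$ (a homomorphism $\bbZ/2\bbZ\to\bbZ/2\bbZ$ is an isomorphism iff it is nonzero, and an odd-degree isogeny cannot kill the nontrivial component since composing with a dual isogeny gives multiplication by the odd integer $\deg\phi$, which is the identity on $\bbZ/2\bbZ$), or, more uniformly, I invoke the dual isogeny $\hat\phi:E_0\to E$ with $\hat\phi\circ\phi=[\deg\phi]$ and $\phi\circ\hat\phi=[\deg\phi]$. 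Since $\deg\phi$ is odd, $[\deg\phi]$ acts as the identity on the $2$-torsion groups $H^2(G_M,E)$ and $H^2(G_M,E_0)$ (multiplication by an odd number is an automorphism of a group of exponent $2$, and in fact the identity on $\bbZ/2\bbZ$). Therefore $\hat\phi_*\circ\phi_*=\mathrm{id}$ and $\phi_*\circ\hat\phi_*=\mathrm{id}$, so $\phi_*$ is an isomorphism.

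I expect the cleanest route is the dual-isogeny argument, which avoids any case analysis of real components: it works purely formally from functoriality of $H^2(G_M,-)$ in the elliptic curve (part of Proposition \ref{prop}), the existence of $\hat\phi$ with $\hat\phi\phi=\phi\hat\phi=[\deg\phi]$, and the observation that $H^2(G_M,E)$ and $H^2(G_M,E_0)$ are annihilated by $2$ hence multiplication by the odd integer $\deg\phi$ is the identity on them. The only mild subtlety — and the step I would be most careful about — is confirming that the isomorphisms of Proposition \ref{prop} are genuinely functorial in $E$, so that $[\deg\phi]_*$ is multiplication by $\deg\phi$ on the abelian group and not something more exotic; this is exactly the functoriality asserted in the cited Proposition \ref{prop}, so it is available. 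No further input is needed.
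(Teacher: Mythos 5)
Your proposal is correct and follows essentially the same route as the paper: reduce via the functorial identification of Proposition \ref{prop} to the (2-torsion) real-place contributions and use the dual isogeny $\hat\phi$ with $\hat\phi\circ\phi=[\deg\phi]$, which acts invertibly (indeed as the identity) on groups of exponent $2$ since $\deg\phi$ is odd. If anything, your two-sided version ($\hat\phi_*\phi_*=\phi_*\hat\phi_*=\mathrm{id}$) is slightly cleaner than the paper's, which composes in only one direction and then concludes using that each local group is $\bbZ/2\bbZ$.
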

\begin{proof}
  It is sufficient to show that $\phi$ induces an isomorphism $$H^2(G_{M_\nu},E)\to H^2(G_{M_\nu},E_0)$$ at each real place $\nu$. Denote the dual isogeny by $\hat{\phi}:E_0\to E$. The composition $\hat{\phi}\circ \phi$ induces a composite map $$H^2(G_{M_\nu},E)\to H^2(G_{M_\nu},E_0)\to H^2(G_{M_\nu},E)$$ which is the multiplication by an odd integer $\deg(\phi)^2$. Therefore it is an isomorphism. Similarly, the other composition $\phi\circ \hat{\phi}$ induces an isomorphism. Then the first arrow is also an isomorphism since all the cohomology groups are either trivial or cyclic of order 2.
 
\end{proof}

Recall that \textit{a Belyi function} is a surjective morphism $\beta: X\to \bbP^1$ unramified outside $\{0,1,\infty\}$, the cover $(X,\beta)$ is also called \textit{a Belyi pair}.
We quote a refined version of Belyi theorem from \cite[p.2, Th\'eor\`eme]{C96} which we use to prove the key lemma below.

  \begin{theorem}\label{th1}
  For each curve $X$ over a number field $M$, there exists a Belyi function $\beta: X\to \bbP^1_M$ over $M$ such that $\Aut(X,\beta)=\{id\}$. 
  \end{theorem}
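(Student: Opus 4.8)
The plan is to first produce an auxiliary (not necessarily ramification-restricted) cover of $X$ without automorphisms, and then to ``Belyi-fy'' it in a way that does not create new automorphisms. First I would choose a non-constant morphism $h\colon X\to\bbP^1_M$ defined over $M$ with $\Aut(X,h)=\{\id\}$. Such an $h$ exists: take $h$ to be a general pencil in the complete linear system $|L|$ of an $M$-rational very ample line bundle $L$ of large degree $d$ on $X$ (such $L$ exists since a curve over $M$ carries $M$-rational effective divisors of positive degree, e.g. push-forwards of closed points). A general cover of degree $d$ has full symmetric monodromy group $G_h=S_d$, and the automorphism group of a connected cover of degree $d$ is the centralizer of its monodromy group in $S_d$, which here is $C_{S_d}(S_d)=\{\id\}$; since the parameter space of such pencils is a rational $M$-variety and $M$ is infinite, a suitable $h$ can be found over $M$. (For $g(X)\le 1$ one argues directly in the same spirit.)

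Next I would Belyi-fy $h$. Let $B_h\subset\bbP^1(\overline M)$ be the (finite, $G_M$-stable) branch locus of $h$. By Belyi's construction there is a rational map $p\colon\bbP^1\to\bbP^1$, defined over $M$ (indeed over $\bbQ$), unramified outside $\{0,1,\infty\}$ and with $p(B_h)\subseteq\{0,1,\infty\}$; then $\beta:=p\circ h\colon X\to\bbP^1_M$ is a Belyi function over $M$. I would additionally require that $p$ be chosen so that every irreducible component of the fibre product $\bbP^1\times_{\bbP^1,p}\bbP^1$ other than the diagonal $\Delta$ has geometric genus $>g(X)$. Since Belyi's algorithm presents $p$ as a composition $q_r\circ\cdots\circ q_1$ of elementary rational maps, and each non-diagonal component of $\bbP^1\times_{q_r\circ\cdots\circ q_1}\bbP^1$ either coincides with, or dominates, a non-diagonal component of one of the $\bbP^1\times_{q_i}\bbP^1$, it suffices to arrange that each $q_i$ has $2$-transitive monodromy (so its off-diagonal fibre product is irreducible) and off-diagonal fibre-product genus $>g(X)$. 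For the maps of shape $\frac{(m+n)^{m+n}}{m^m n^n}\,z^m(1-z)^n$ (which push rational points into $\{0,1,\infty\}$) this holds whenever $m+n$ is a large prime: the monodromy is generated by an $(m+n)$-cycle and a transposition, hence equals $S_{m+n}$, and a Riemann--Hurwitz computation shows the genus of the associated off-diagonal curve grows with $m+n$; the maps used to rationalise the branch points can similarly be taken of large degree with symmetric monodromy. \emph{This is the step I expect to be the main obstacle}: ensuring the genus bound holds uniformly across all the intermediate maps while respecting the constraints of Belyi's algorithm and keeping everything over $M$.

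Finally I would conclude $\Aut(X,\beta)=\{\id\}$. Let $\sigma\in\Aut(X,\beta)$, so $p\circ(h\circ\sigma)=p\circ h$. If $h\circ\sigma=h$, then $\sigma\in\Aut(X,h)=\{\id\}$. Otherwise $h$ and $h\circ\sigma$ are distinct morphisms, so the morphism $\Gamma\colon X\to\bbP^1\times\bbP^1$, $x\mapsto\bigl(h(x),h(\sigma x)\bigr)$, is non-constant, factors through $\bbP^1\times_{\bbP^1,p}\bbP^1$, and does not factor through $\Delta$; its image therefore lies in some non-diagonal irreducible component $C$, and $\Gamma$ induces a dominant morphism from $X$ onto the normalisation $\widetilde C$. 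Hence $g(X)\ge g(\widetilde C)>g(X)$, a contradiction. Thus $\sigma=\id$, and $\beta$ is the required Belyi function with $\Aut(X,\beta)=\{\id\}$.
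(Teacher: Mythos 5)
You are trying to reprove a statement that the paper itself does not prove: Theorem \ref{th1} is quoted verbatim from Couveignes \cite{C96}, so there is no internal argument to compare yours against, and your proposal has to stand on its own. Its skeleton is reasonable: producing an auxiliary $h\colon X\to\bbP^1_M$ with $\Aut(X,h)=\{\id\}$ via a general pencil (simple branching, hence transitive monodromy generated by transpositions, hence $S_d$, hence trivial centralizer, with $M$-points available because the good locus is open in a rational variety over an infinite field) is fine; the reduction ``every non-diagonal component of the fibre product of a composite dominates a non-diagonal component of the fibre product of one of the factors'' is correct; and the closing argument (the image of $x\mapsto(h(x),h(\sigma x))$ is an irreducible component of $\bbP^1\times_{p,p}\bbP^1$, so either it is the diagonal and $\sigma\in\Aut(X,h)=\{\id\}$, or $X$ dominates a non-diagonal component) is also correct.

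The genuine gap is exactly where you flagged it, and it is fatal as written: the Riemann--Hurwitz claim for the maps $q_{m,n}(z)=\frac{(m+n)^{m+n}}{m^m n^n}z^m(1-z)^n$ is false. Put $d=m+n$; the local monodromies over $0,1,\infty$ are an $m$-cycle times an $n$-cycle, a transposition, and a $d$-cycle, and their cycle counts on the $d(d-1)$ ordered distinct pairs are $d-2+2\gcd(m,n)$, $d^2-3d+3$ and $d-1$ respectively. Riemann--Hurwitz for the off-diagonal curve $C$ (irreducible when the monodromy is $2$-transitive) gives $2g(\widetilde C)-2=d(d-1)-\bigl[(d-2+2\gcd(m,n))+(d^2-3d+3)+(d-1)\bigr]=-2\gcd(m,n)$, i.e. $g(\widetilde C)=1-\gcd(m,n)=0$ whenever $\gcd(m,n)=1$, in particular whenever $d$ is prime. (Sanity check for $m=2,n=1$: $q(u)-q(v)$ is $u-v$ times a conic.) So the off-diagonal genus does not grow with $m+n$; it is always $0$, and your criterion ``genus $>g(X)$'' can never be met with these building blocks for any $g(X)\ge 0$. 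A second, related problem is the first stage of Belyi's algorithm: the maps used to rationalise the branch points are (essentially) minimal polynomials of the branch values of $h$, dictated by $h$ rather than chosen, so you cannot ``similarly take them of large degree with symmetric monodromy.'' To salvage the strategy you would need genuinely different elementary maps, or a different mechanism than a genus bound, to preclude new automorphisms of $p\circ h$; as it stands the proof does not go through, and the paper's actual source for this statement remains Couveignes' argument in \cite{C96}.
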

  
  Recall that a \textit{non-CM elliptic curve} is an elliptic curve $E$ which does not have a complex multiplication or  equivalently $\End(E)=\bbZ$. A \textit{$G_M$-submodule} of an elliptic curve $E$ over a field $M$ is a subgroup $C\subset E$ which is preserved by the action of $G_M$, that is $gC\subset C$ for all $g\in G_M$.
  
\begin{lemma}\label{lemma3}
Let $E$ be a non-CM elliptic curve over a number field $M$ and let $C\subset E$ be a finite subgroup such that $C$ is a $G_M$-submodule of $E$. Assume that at least one of the following holds true: \begin{enumerate}[label=(\roman*)]
    \item The order of $C$ is odd;
    \item The field $M$ is purely imaginary.
\end{enumerate}Then for each element $\sigma \in H^2(G_M,C)$ there exists a Belyi map $\beta:E_{\algQ}\to \bbP^1_{\algQ}$ with a field of moduli $M$ having $\sigma$ as the obstruction constructed in Corollary \ref{cor3}.
\end{lemma}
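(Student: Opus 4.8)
The goal is to realize an arbitrary class $\sigma \in H^2(G_M, C)$ as the obstruction of a Belyi pair whose regular part is multiplication by $m := \#C$ on $E$. First I would use the exact sequence of $G_M$-modules
$$
\xymatrix{ 0 \ar[r] & C \ar[r] & E \ar[r]^-{[m]} & E \ar[r] & 0 }
$$
(valid because $C \subset E[m]$; in fact one wants $C = E[m]$ to be flexible, but the statement only needs $C \subset E[m']$ for a suitable $m'$, so I would pass to $[m']$ with $C \subset \ker[m']$ and use the hypotheses to adjust) and its long exact cohomology sequence, which gives a boundary map $\delta : H^1(G_M, E) \to H^2(G_M, C)$ whose cokernel injects into $H^2(G_M, E)$. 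The key point is that under hypothesis (i) or (ii) the relevant copy of $H^2(G_M, E)$ vanishes or is killed: by Proposition \ref{prop}, $H^2(G_M, E) \cong \prod_{\nu\ \text{real}} \bbZ/2\bbZ$, so if $M$ is totally imaginary this group is $0$ and $\delta$ is surjective; if $\#C$ is odd, then multiplication by $\#C$ on $H^2(G_M,E)$ is an isomorphism (Corollary \ref{lemma2}'s argument), and since $\delta$'s cokernel embeds into the $\#C$-torsion-related part one again gets that every $\sigma$ lifts to some $\tau_0 \in H^1(G_M, E)$ with $\delta(\tau_0) = \sigma$. So: choose $\tau_0 \in H^1(G_M, E)$ (equivalently an $M$-torsor $X_0$ of the target copy of $E$, with Jacobian $E$) mapping to $\sigma$ under $\delta$.

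**Building the Belyi pair.** Now I have a genus-one curve $X_0$ over $M$ and an isogeny $[m]: E_{\algQ} \to (X_0)_{\algQ} = E_{\algQ}$ (after identifying the target with $E$ via its Jacobian). By Proposition \ref{prop8} read in reverse, this data $(X_0, [m], \tau_0)$ determines a genus-one cover $f^{\reg}: X \to (X_0)_{\algQ}$ whose field of moduli is $M$ and whose obstruction in $H^2(G_M, C)$ is exactly $\delta(\tau_0) = \sigma$ — provided I can realize $f^{\reg}$ as the regular part of an actual Belyi pair $(X,\beta)$ with $\Aut(X,\beta) = C$. This is where Theorem \ref{th1} (the refined Belyi theorem) enters: the reduced part $X_0 \to \bbP^1$ should be an automorphism-free Belyi map, which Theorem \ref{th1} provides for the curve $X_0$ over $M$. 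Composing, I get $\beta := \beta_0 \circ f^{\reg}: X_{\algQ} \to \bbP^1_{\algQ}$, a cover unramified outside $\{0,1,\infty\}$ (since $f^{\reg}$ is unramified — $C$ acts freely — and $\beta_0$ is Belyi). By Proposition \ref{prop4}, because the reduced part $\beta_0$ is automorphism-free and defined over $M$, the field of moduli of $\beta$ equals the field of moduli of $f^{\reg}$ (which is $M$), and $\beta$ is defined over $M$ iff $f^{\reg}$ has an $M$-model over $X_0$, i.e. iff $\sigma = 0$. Hence the obstruction of Corollary \ref{cor3} for $(X,\beta)$ is precisely $\sigma$.

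**The main obstacle.** The delicate step is the surjectivity (or "large enough image") of $\delta : H^1(G_M, E) \to H^2(G_M, C)$, and more precisely making sure the hypotheses (i)/(ii) are exactly what is needed. One must chase the long exact sequence associated to $0 \to C \to E[m'] \to E[m']/C \to 0$ or rather to $C \hookrightarrow E \xrightarrow{[m']} E$ carefully: $\mathrm{coker}(\delta) \hookrightarrow H^2(G_M, E)[\,\cdot\,]$, and one needs that the image of $H^2(G_M,C)$ in $H^2(G_M,E)$ is zero. Under (ii) this is immediate from Proposition \ref{prop}. Under (i), since $C$ has odd order, $H^2(G_M,C)$ is a group of odd order (it is $m'$-torsion appropriately), while $H^2(G_M,E)$ is $2$-torsion, so any homomorphism between them is zero — hence $\delta$ is onto. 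I also need to confirm the non-CM hypothesis is genuinely used: it guarantees $\Aut(E) = \{\pm 1\}$ so that the isogeny picture of Proposition \ref{prop7}(1) is clean and the identification "$\Aut(X,f) = \ker$ of the isogeny $= C$" holds without extra automorphisms polluting the cover; and it ensures that when we apply Lemma \ref{lemma1}/\ref{lemma0} the elliptic curve $E$ and the $M$-isogeny are genuinely pinned down. A secondary technical point is checking that $\beta = \beta_0 \circ f^{\reg}$ really has $\Aut(X,\beta) = C$ and not something larger — this follows since $\Aut(X_0,\beta_0) = \{\id\}$ forces any automorphism of $(X,\beta)$ to lie over the identity of $X_0$, hence in the deck group $C$ of $f^{\reg}$. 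Assembling these, $(X,\beta)$ is the required Belyi pair.
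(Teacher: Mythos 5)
Your overall route is the same as the paper's: lift $\sigma$ through the boundary map (your odd-torsion versus $2$-torsion argument in case (i) is fine, and in fact slightly more direct than the paper's Corollary \ref{lemma2}), take the $M$-torsor $X_0$ attached to the lift, apply Theorem \ref{th1} to $X_0$, and compose with the unramified isogeny. But your starting exact sequence is wrong for the lemma as stated: $0\to C\to E\xrightarrow{[m]}E\to 0$ is exact only when $C=E[m]$, since $\ker[m]=E[m]$, and your proposed repair (pass to some $[m']$ with $C\subset\ker[m']$) does not help --- the boundary map would then land in $H^2(G_M,E[m'])$ and the cover you build would have automorphism group $E[m']$, so a class $\sigma\in H^2(G_M,C)$ would not be realized as \emph{the} obstruction of Corollary \ref{cor3}. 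The correct move, and the paper's, is to use the quotient isogeny $\phi:E\to E_0:=E/C$, which is defined over $M$ because $C$ is $G_M$-stable and whose kernel is exactly $C$; your identification of the target with $E$ ``via its Jacobian'' (and of $X_0$ with a torsor of $E$ rather than of $E_0$) is legitimate only in the special case $C=E[m]$, which is what Theorems \ref{th2} and \ref{th3} use, but not for a general $G_M$-subgroup $C$.

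The automorphism computation is also not justified as written. From $\Aut(X_0,\beta_0)=\{\id\}$ alone you cannot conclude that an automorphism $h$ of the composite $\beta=\beta_0\circ\phi$ ``lies over the identity of $X_0$'': in general an automorphism of a composite cover need not preserve the intermediate curve at all, so there is nothing yet to compare with $\Aut(X_0,\beta_0)$. This is exactly where non-CM enters: since $\End(E)=\bbZ$, every automorphism of the genus-one curve $E$ has the form $x\mapsto \varepsilon x+a$ with $\varepsilon=\pm1$, hence it \emph{does} descend along $\phi$ to $y\mapsto \varepsilon y+\phi(a)$, an automorphism of $\beta_0$; automorphism-freeness of $\beta_0$ then forces $\varepsilon=1$ and $\phi(a)=0$, i.e. $\Aut(E,\beta)=\ker\phi=C$. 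You note that non-CM gives $\Aut(E)=\{\pm1\}$, but you never deploy it at the step where it is needed. Finally, ``Proposition \ref{prop8} read in reverse'' silently uses Lemma \ref{lemma0} (the field of moduli of $\phi$ is unchanged when the base $E_0$ is replaced by its torsor $X_0$); since the fact that $M$ is a field of moduli of $\beta$ is precisely what must be proved, this step should be made explicit, as the paper does.
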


\begin{proof}
Let us denote the quotient curve $E/C=E_0$. It is defined over $M$ because $C$ is a $G_M$-submodule of $E$. Then the exact sequence of $G_M$-modules $$
\xymatrix{
  0 \ar[r] & C  \ar[r] & E  \ar[r]^-{} & E_0  \ar[r] & 0 }$$ induces a long exact sequence 
  $$
\xymatrix{
  H^1(G_M,E) \ar[r] & H^1(G_M,E_0)  \ar[r]^-{\delta} & H^2(G_M,C)  \ar[r] & H^2(G_M,E)  \ar[r] &  H^2(G_M,E_0)}.$$
  In the case of an odd order of $C$ the latter arrow is an isomorphism by Corollary \ref{lemma2}. Therefore the boundary map $\delta$ is surjective. If $M$ is purely imaginary $H^2(G_M,E)=0$ by  Proposition \ref{prop}, so $\delta$ is also surjective.
  
  Choose $\sigma \in H^2(G_M,C)$ and lift it to an element $\tau_0 \in H^1(G_M,E_0)$. Then the element $\tau_0$ defines a principal homogeneous space $X_0$ of the elliptic curve $E_0$ over $M$. 
  
  Applying Theorem \ref{th1} to $X_0$ we obtain a Belyi map $\beta_0: X_0\to \bbP^1_{M}$ without non-trivial automorphisms. We next want to consider the composition $\beta=(\beta_0)_{\algQ}\circ (\phi)_{\algQ}$ where $\phi$ is the isogeny $E\to E_0$: 
 $$\beta: \xymatrix{
  E_{\algQ} \ar[r]^-{(\phi)_{\algQ}} &  (E_0)_{\algQ} \ar[r]^{\sim} & (X_0)_{\algQ} \ar[r]^{(\beta_0)_{\algQ}} &  \bbP^1_{\algQ}}.$$
 
  (We identify $(E_0)_{\algQ}$ with $(X_0)_{\algQ}$.) First note that $\beta$ is a Belyi function over $\algQ$ because $\phi$ is an unramified cover. 
  
  Let $h:E\to E$ be an automorphism of $\beta$. Since $E$ is non-CM, $h$ has the form $h(x)=\varepsilon \cdot x+a$ where $\varepsilon\in \{+1,-1\}$, $a\in E$. Then we deduce $$\beta_0( \phi (x))=\beta_0( \phi (\varepsilon x+a))=\beta_0(\varepsilon\phi(x)+\phi(a)). $$ Therefore $\varepsilon \cdot x+\phi(a)$ is an automorphism of $\beta_0$. Consequently, $\varepsilon=1, \phi(a)=0$ since $\beta_0$ is automorphisms free. Hence $\Aut(X,\beta)=\ker(\phi)$ and $\beta_0$ is the reduced part of $\beta$. 
  
  To see that $\beta$ fits our requirements we only have to check that $M$ is a field of moduli of $\beta$. Since $M$ is a field of moduli (and even a field of definition) of $\phi$, Lemma \ref{lemma0} implies that $M$ is a field of moduli of $\phi$ with respect to the $M$-base $X_0$. Therefore $M$ is a field of moduli of $\beta$ since $\beta_0:X_0\to\bbP^1_{M}$ is defined over $M$.
  
  \end{proof}
  \begin{proposition}\label{prop9}
  Consider an elliptic curve $E$ over a number field $M$. Assume that $E[m]\subset E(M)$ and $\zeta_m\in M$ for a positive integer $m$ where $\zeta_m$ is a primitive $m$-th root of unity. Then there exists an element $\sigma \in H^2(G_M,E[m])$ such that $\res^M_L(\sigma)\neq 0$ for all $L\supset M$ with $[L:M]<m$.
  \end{proposition}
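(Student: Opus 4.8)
The plan is to translate the statement into one about the Brauer group of $M$, and then to settle the latter using class field theory together with an elementary divisibility observation.

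\emph{A reduction.} Since $E[m]\subset E(M)$, the group $G_M$ acts trivially on $E[m]$, so $E[m]\cong(\bbZ/m\bbZ)^{\oplus 2}$ as a $G_M$-module. Fixing such an isomorphism and using $\zeta_m\in M$ to identify $\bbZ/m\bbZ\cong\mu_m$ as $G_M$-modules, one gets
$$H^2(G_M,E[m])\;\cong\;H^2(G_M,\mu_m)^{\oplus 2}\;\cong\;\bigl(\mathrm{Br}(M)[m]\bigr)^{\oplus 2},$$
where the last isomorphism comes from the Kummer sequence and Hilbert's Theorem 90. Because $E[m]\subset E(L)$ and $\zeta_m\in L$ for every $L$ with $M\subset L\subset\algQ$, this chain of identifications is compatible with the restriction maps. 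Hence it is enough to exhibit one class $\beta\in\mathrm{Br}(M)[m]$ with $\res^M_L(\beta)\neq 0$ whenever $[L:M]<m$, and then to take for $\sigma$ the class corresponding to $(\beta,0)$.

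\emph{Construction of $\beta$.} I would pick two distinct non-archimedean places $\nu_0\neq\nu_1$ of $M$ and use the fundamental exact sequence of class field theory $0\to\mathrm{Br}(M)\to\bigoplus_\nu\mathrm{Br}(M_\nu)\to\bbQ/\bbZ\to 0$ (see e.g.\ \cite{M06}) to produce a class $\beta\in\mathrm{Br}(M)$ with local invariants $\mathrm{inv}_{\nu_0}(\beta)=\tfrac1m$, $\mathrm{inv}_{\nu_1}(\beta)=-\tfrac1m$, and $\mathrm{inv}_\nu(\beta)=0$ at every other place. Since over a number field the order of a Brauer class is the least common multiple of the orders of its local invariants, $\beta$ has order exactly $m$, so in particular $\beta\in\mathrm{Br}(M)[m]$.

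\emph{The divisibility step.} Let $L\supset M$ with $[L:M]<m$ and choose any place $w_0$ of $L$ above $\nu_0$. Restriction of Brauer classes multiplies the local invariant by the local degree, so
$$\mathrm{inv}_{w_0}\bigl(\res^M_L\beta\bigr)\;=\;[L_{w_0}:M_{\nu_0}]\cdot\tfrac1m ,$$
and $1\le[L_{w_0}:M_{\nu_0}]\le[L:M]<m$ forces this to be a nonzero element of $\bbQ/\bbZ$. Hence $\res^M_L\beta\neq 0$ in $\mathrm{Br}(L)$, and therefore $\res^M_L\sigma\neq 0$, as required. I do not expect a real obstacle: the only nontrivial inputs are standard class field theory (the fundamental exact sequence, so that a global class with prescribed local invariants exists, and the local-degree formula for the restriction of Brauer classes), and the only point needing care is checking that the isomorphisms in the reduction step commute with restriction. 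The conceptual heart is just the pigeonhole remark in the last step: a local degree bounded by $[L:M]<m$ cannot be a multiple of $m$, hence cannot annihilate a local invariant of exact order $m$.
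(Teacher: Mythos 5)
Your proof is correct and follows essentially the same route as the paper: both reduce, via $E[m]\cong\mu_m\times\mu_m$, the Kummer sequence and Hilbert 90, to producing a class in $\mathrm{Br}(M)[m]$ that survives restriction to every extension of degree less than $m$. The only difference is that where the paper simply invokes a cyclic algebra of order $m$, you construct the class explicitly by prescribing local invariants $\pm\tfrac1m$ via the fundamental (Albert--Brauer--Hasse--Noether) exact sequence and verify non-splitting with the local-degree formula, which just makes the final step self-contained.
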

\begin{proof}
Under the hypothesis of the Proposition we have an isomorphism of $G_M$-modules $$E[m]\cong \mu_m \times \mu_m$$ as both of them have trivial Galois action. In particular, $$H^2(G_M,E[m])\cong H^2(G_M,\mu_m)\times H^2(G_M,\mu_m).$$
The Kummer exact sequence $$
\xymatrix{
  1 \ar[r] & \mu_m  \ar[r] & \algK^* \ar[r]^-{m} & \algK^*  \ar[r] & 1}$$  together with Hilbert 90 theorem lead to a canonical isomorphism $$H^2(G_M,\mu_m)\iso Br(M)[m].$$ Therefore it is sufficient to find an element in $Br(M)[m]$ which does not split over extensions of degree less than $m$. For example, one can take a cyclic algebra of order $m$ \cite{Mcft}.  
\end{proof}
We are now ready to prove our first main result. 
\begin{theorem}\label{th2} Let $E$ be a non-CM elliptic curve over $\algQ$ and let $m$ be a positive integer. Then there exists a Belyi function $\beta:E \to \bbP^1_{\algQ}$  with the absolute field of moduli $M^{abs}$ and the following property:
for any field of definition $L$ of the Belyi pair $(E,\beta)$, we have $[L:M^{abs}]\geq m$.
\end{theorem}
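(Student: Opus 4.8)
The plan is to combine Lemma~\ref{lemma3} and Proposition~\ref{prop9} with a suitable choice of auxiliary number field. First I would reduce the statement over $\algQ$ to a statement over a number field $M$: given the non-CM elliptic curve $E/\algQ$, it is already defined over some number field, and after a finite extension we may assume that $E$ is defined over a number field $M$ which is purely imaginary, satisfies $\zeta_m \in M$, and has $E[m] \subset E(M)$. Note that enlarging $M$ only enlarges the torsion and the roots of unity available, so all three conditions can be met simultaneously by passing to a large enough finite extension; we also retain that $E$ is non-CM since this is intrinsic to $E_{\algQ}$.

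Next I would apply Proposition~\ref{prop9} with the subgroup $C = E[m]$: since $E[m] \subset E(M)$ and $\zeta_m \in M$, there is an element $\sigma \in H^2(G_M, E[m])$ with $\res^M_L(\sigma) \neq 0$ for every $L \supset M$ with $[L:M] < m$. Then I would feed $\sigma$ into Lemma~\ref{lemma3}: the group $C = E[m]$ is a $G_M$-subgroup (indeed $G_M$ acts trivially on it), and $M$ is purely imaginary, so hypothesis (ii) of the lemma is satisfied. This produces a Belyi function $\beta : E_{\algQ} \to \bbP^1_{\algQ}$ whose field of moduli is $M$ and whose obstruction of Corollary~\ref{cor3} is exactly $\sigma$.

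It remains to translate the non-vanishing of $\res^M_L(\sigma)$ into the lower bound on fields of definition. By the remark following Corollary~\ref{cor3}, the obstruction is functorial: for any field $L \supset M$ the cover $\beta$ is defined over $L$ if and only if $\res^M_L(\sigma) = 0$. Hence $\beta$ is defined over no extension $L/M$ of degree $< m$. Now let $M^{abs}$ be the absolute field of moduli of $\beta$; by Proposition~\ref{prop2}(i), every field of definition contains $M^{abs}$, and since $M$ is itself a field of moduli we have $M^{abs} \subseteq M$. Given any field of definition $L$, it is in particular a field of definition containing $M$ as well — more precisely, $LM$ is a field of definition containing $M$, so $[LM:M] \geq m$. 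I would then conclude $[L:M^{abs}] \geq [LM:M] \geq m$, using that $[L:M^{abs}] \geq [LM : M^{abs}] / [M : M^{abs}] \cdot [M:M^{abs}]$... more cleanly: $[LM : M^{abs}] = [LM:M][M:M^{abs}] \geq m\,[M:M^{abs}]$ while $[LM:M^{abs}] = [LM:L][L:M^{abs}]$, and $[LM:L] \leq [M:M^{abs}]$, giving $[L:M^{abs}] \geq m$.

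The main obstacle I anticipate is the bookkeeping in this last paragraph: one must be careful that the field of moduli of $\beta$ is genuinely $M$ (not a proper subfield), so that the degree bound relative to $M$ really does give a bound relative to $M^{abs}$, and one must handle the possibility $M^{abs} \subsetneq M$ by a compositum argument rather than assuming $L \supseteq M$. Everything else is a direct assembly of the already-established lemmas; the genuinely substantive inputs — surjectivity of the boundary map via Proposition~\ref{prop}, the Belyi-theoretic realization in Lemma~\ref{lemma3}, and the Brauer-group construction in Proposition~\ref{prop9} — are all in hand.
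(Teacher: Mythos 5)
Your proposal is correct and follows essentially the same route as the paper: choose $M \supseteq K(E[m],\zeta_m)$, take $\sigma$ from Proposition~\ref{prop9}, realize it via Lemma~\ref{lemma3}, and convert non-vanishing of restrictions into the degree bound through the compositum $LM$ (the paper's one-line inequality $[L:M^{abs}]\geq[ML:M]\geq m$ is exactly your bookkeeping, compressed). The only difference is cosmetic: the paper reduces to odd $m$ so as to invoke hypothesis (i) of Lemma~\ref{lemma3}, whereas you enlarge $M$ to be purely imaginary and invoke hypothesis (ii); both are valid.
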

\begin{proof}
We may assume $m$ is odd. The curve $E$ is defined over some number field $K$. Consider the field $M=K(E[m], \zeta_m)$. Apply Proposition \ref{prop9} to get an element $\sigma \in H^2(G_M,E[m])$ which does not split over extensions of degree less than $m$. According to Lemma \ref{lemma3}, $\sigma$ arises as an obstruction for some Belyi function $\beta: E_{\algQ} \to \bbP^1_{\algQ}$. Let $M^{abs}$ be the absolute field of moduli of $\beta$. Then $M^{abs}$ is contained in $M$ by definition. If $\beta$ is defined over an extension $L\supset M^{abs}$, then $\sigma$ vanish when restricted to $ML$. Therefore we conclude $$[L:M^{abs}]\geq [ML:M]\geq m.$$

\end{proof}

\section{A counterexample to the Hasse principle}
Using the cohomological approach from the previous section one may construct a Belyi pair defined everywhere locally though not globally. 

\begin{definition}
 Let $M$ be a number field and let $W$ be a $G_M$-module. We define the \textit{$i$-th Tate-Shafarevich group of $W$} as the kernel of the corresponding local-global map:
 $$\Sh^i(G_M,W)=\ker\left(H^i(G_M,W)\to\prod_{\nu}H^i(G_{M_\nu},W)\right)$$ where the product is taken for all places $\nu$.
\end{definition}

\begin{theorem}\label{th3}
Let $M$ be a number field, let $E$ be a non-CM elliptic curve over $M$ and let $m$ be a positive integer such that $m$ is odd or $M$ is purely imaginary. Additionally assume that $\Sh^1(G_M,E[m])\neq 0$. Then there exists a Belyi function $\beta:E_{\algQ}\to\bbP^1_{\algQ}$ with a field of moduli $M$ such that $\beta$ is defined over $M_\nu$ for all places $\nu$ and $\beta$ is not defined over $M$.
    
\end{theorem}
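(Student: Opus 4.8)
The plan is to apply Lemma \ref{lemma3} with the $G_M$-subgroup $C = E[m] \subset E$. Its order $m^2$ is odd exactly when $m$ is odd, so together with the hypothesis that $M$ is purely imaginary in the remaining case, Lemma \ref{lemma3} applies: for every class $\sigma \in H^2(G_M, E[m])$ it produces a Belyi function $\beta : E_{\algQ} \to \bbP^1_{\algQ}$ with field of moduli $M$ whose obstruction in the sense of Corollary \ref{cor3} equals $\sigma$. By the functoriality of this obstruction (the Remark following Corollary \ref{cor3}), $\beta$ is defined over an intermediate field $L$, $M \subseteq L \subseteq \algQ$, if and only if the restriction of $\sigma$ to $H^2(G_L, E[m])$ vanishes; and, fixing an embedding $\algQ \hookrightarrow \overline{M_\nu}$ and using the induced decomposition-group map $G_{M_\nu} \to G_M$ together with Galois descent over the complete field $M_\nu$ (Fact \ref{fact1}), $\beta$ acquires a model over $M_\nu$ as soon as the image of $\sigma$ in $H^2(G_{M_\nu}, E[m])$ is zero. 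Hence the theorem reduces to exhibiting a nonzero element of $\Sh^2(G_M, E[m])$.

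For this I would invoke global (Poitou–Tate) duality. The Weil pairing gives a $G_M$-equivariant isomorphism $E[m] \iso \operatorname{Hom}(E[m], \mu_m)$, so $E[m]$ is its own Cartier dual; global duality then supplies a perfect pairing of finite abelian groups $\Sh^1(G_M, E[m]) \times \Sh^2(G_M, E[m]) \to \bbQ/\bbZ$. Since $\Sh^1(G_M, E[m]) \neq 0$ by assumption, this forces $\Sh^2(G_M, E[m]) \neq 0$, and we may choose $0 \neq \sigma \in \Sh^2(G_M, E[m])$. The hypothesis that $m$ is odd or $M$ is purely imaginary also makes the archimedean places harmless here: in the first case $H^2(G_{M_\nu}, E[m]) = 0$ at every real place because $E[m]$ has odd order while $G_{M_\nu}$ is finite of order $2$, and in the second case there are no real places at all.

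Feeding this $\sigma$ into the construction of the first paragraph yields a Belyi function $\beta : E_{\algQ} \to \bbP^1_{\algQ}$ with field of moduli $M$. Because $\sigma$ restricts to $0$ in $H^2(G_{M_\nu}, E[m])$ for every place $\nu$, $\beta$ is defined over each $M_\nu$; because $\sigma \neq 0$, Corollary \ref{cor3} shows $\beta$ is not defined over $M$. This is precisely the asserted counterexample to the Hasse principle for Belyi pairs. I expect the only genuine difficulty to be the invocation of Poitou–Tate duality in exactly the form needed — pinning down the self-duality of $E[m]$ via the Weil pairing and matching the paper's definition of $\Sh^2$ (taken over all places, archimedean included) with the duality statement one cites — since all of the cover-theoretic content has already been packaged into Lemma \ref{lemma3} and Corollary \ref{cor3}.
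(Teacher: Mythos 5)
Your proposal is correct and follows essentially the same route as the paper: choose a nonzero class in $\Sh^2(G_M,E[m])$ via global Tate (Poitou--Tate) duality from $\Sh^1(G_M,E[m])\neq 0$, realize it as the obstruction of a Belyi function through Lemma \ref{lemma3}, and conclude local definability from the vanishing of all local restrictions together with non-definability over $M$ from $\sigma\neq 0$. Your added remarks (self-duality of $E[m]$ via the Weil pairing, behaviour at real places) merely make explicit points the paper leaves implicit.
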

\begin{proof}
By global Tate duality \cite[Theorem 4.10]{M06} $\Sh^2(G_M,E[m])$ is dual to $\Sh^1(G_M,E[m])$ and therefore it is non-trivial. Let us pick a non-zero element $\sigma \in \Sh^2(G_M,E[m])$. By Lemma \ref{lemma3} it arises as an obstruction for some Belyi function $\beta: E_{\algQ}\to\bbP^1_{\algQ}$. Note that $\res_\nu(\sigma)=0$ is the obstruction corresponding to the local cover $(E_{\overline{K_\nu}}, \beta_{\overline{K_\nu}})$. So $\beta$ is defined over $M_\nu$ for all places $\nu$ however it is not defined over $M$. \end{proof}

\textbf{A numerical example.} We present an example when the conditions of Theorem \ref{th3} are satisfied. The exact sequence $$
\xymatrix{
  0 \ar[r] & E[m]  \ar[r] & E  \ar[r]^-{m} & E  \ar[r] & 0}$$ induces a canonical injection $$H^0(G_M,E/mE)=E(M)/mE(M)\into H^1(G_M,E[m]).$$ Therefore $\Sh^0(G_M,E/mE)\neq 0$ would imply $\Sh^1(G_M,E[m])\neq 0$. In \cite{DZ04} the authors provide the following example: a non-CM elliptic curve $$E:y^2=(x+2795)(x-1365)(x-1430)$$ has a point $$P=\left(\frac{5086347841}{1848^2},-\frac{35496193060511}{1848^3}\right)\in E(\bbQ)$$ such that $P$ is divisible by $4$ at all places $\nu$ of $\bbQ$ but $P$ cannot be divided by $4$ over $\bbQ$. In other words $P$ gives a non-trivial class $[P]\in \Sh^0(G_{\bbQ},E/4E)$. Note that $\res^{\bbQ}_{M}[P]\neq 0$ for some purely imaginary number field $M$. For instance, one can take any imaginary $M$ linearly disjoint from $\bbQ(P/4)$. Then the triple $(M,E,4)$ fits the requirements.
  
  \begin{corollary}\label{cor4}
  There exists a genus-one Belyi pair $(X,f)$ over $\algQ$ which cannot be defined over its field of moduli $M$, yet which can be defined over the completion $M_\nu$ for all places $\nu$ of $M$.
  \end{corollary}

{\footnotesize{\scshape Département de mathématiques et applications, École normale supérieure, \\45 rue d’Ulm, Paris, France 75005}}
\\
\textit{Email address:} amolyakov1@gmail.com

\end{document}